\newtheorem{thm}{Theorem}[section]
\newtheorem{prop}[thm]{Proposition}
\theoremstyle{definition}
\newtheorem{rem}[thm]{Remark}
\newtheorem{example}[thm]{Example}
\theoremstyle{remark}
\newcommand{\ds}{\displaystyle}
\newcommand{\R}{\mathbb{R}}
\newcommand{\Rn}{\mathbb R^n}
\newcommand{\de}{\partial}
\newcommand{\eps}{\varepsilon}
\def\O{\Omega}
\newcommand{\ep}{\varepsilon}
\DeclareMathOperator{\dive}{div}
\DeclareMathOperator{\Qp}{\mathcal Q_{p}}
\patchcmd{\abstract}{\scshape\abstractname}{\textbf{\abstractname}}{}{}
\def\@makefnmark{} 
\title{ 
On functionals involving the torsional rigidity
related to some classes of nonlinear operators 
}
\author{Francesco Della Pietra, %
Nunzia Gavitone, %
Serena Guarino Lo Bianco \\[.3cm]%
{\em\scriptsize Universit\`a degli studi di Napoli Federico II, Dipartimento di Matematica e Applicazioni ``R. Caccioppoli''} \\ 
{\em\scriptsize Via Cintia, Monte S. Angelo - 80126 Napoli, Italia.}\thanks{Email: f.dellapietra@unina.it, nunzia.gavitone@unina.it, serena.guarinolobianco@unina.it}
}
\begin{document}

\maketitle


\begin{abstract}
\noindent\textsc{Abstract:} In this paper we study optimal estimates for two functionals involving the anisotropic $p$-torsional rigidity $T_p(\Omega)$, $1<p<+\infty$. More precisely, we study
$\Phi(\Omega)=\frac{T_p(\Omega)}{|\Omega|M(\Omega)}$ and
$\Psi(\Omega)=\frac{T_p(\Omega)}{|\Omega|[R_{F}(\Omega)]^{\frac{p}{p-1}}}$,
where $M(\Omega)$ is the maximum of the torsion function $u_{\Omega}$ and $R_F(\Omega)$ is the anisotropic inradius of $\Omega$. 
\medskip

\textsc{Keywords:} torsional rigidity, anisotropic operators, optimal estimates

\medskip
\textsc{MSC 2010:} 49Q10, 35J25

\end{abstract}

\section{Introduction}

Let $F : \mathbb{R}^N \to [0, +\infty[$, $N\ge 2$, be a convex, even,  $1$-homogeneous and $C^{3,\beta}(\mathbb{R}^N\setminus \{0\})$  function such that
$[F^{p}]_{\xi\xi}\text{ is positive definite in } \R^{N}\setminus\{0\}$, $1<p<+\infty$. The anisotropic $p-$laplacian is the 
operator defined by
\[
\mathcal Q_{p}u:=\sum_{i=1}^N \frac{\partial}{\partial x_i} (F(\nabla u)^{p-1}F_{\xi_i}(\nabla u)).
\]
 For $p=2$, $\mathcal Q_2$ is the so-called Finsler Laplacian, while when $F(\xi)= |\xi|$ is the Euclidean norm, $\mathcal Q_p$ reduces to the well known $p$-Laplace operator.

Given a bounded domain $\Omega$ in $\mathbb{R}^{N}$, let us consider the torsion problem for $\mathcal Q_p$:
\begin{equation}
\label{intro:pb_tor}
\left\{
\begin{array}{ll}-\mathcal Q_{p}u=1  &\text{in}\ \Omega \\
u=0 &\text{on}\ \partial\Omega.
\end{array}
\right.
\end{equation}
The anisotropic $p$-torsional rigidity of $\Omega$ is the number $T_p(\Omega)>0$  defined by 
\begin{equation*}
  T_p(\Omega)=\int_\Omega F(\nabla u_{\Omega})^p dx = \int_\Omega u_{\Omega} dx,
\end{equation*}
where $u_{\Omega}\in W_0^{1,p}(\Omega)$ is the torsion function, that is the unique solution of \eqref{intro:pb_tor}.

The main aim of the paper is the study of optimal estimates for the following two functionals involving $T_p(\Omega)$:
\[
\Phi(\Omega)=\frac{T_p(\Omega)}{|\Omega|M(\Omega)},\qquad
\Psi(\Omega)=\frac{T_p(\Omega)}{|\Omega|[R_{F}(\Omega)]^{q}}.
\]
Here and after we will denote by $q$ the H\"older conjugate of $p$, $q=\frac{p}{p-1}$, by $M(\Omega)$ the maximum of the torsion function $u_{\Omega}$ and by $R_F(\Omega)$ the anisotropic inradius of $\Omega$ (see Section \ref{notprel} for the precise definitions).
Observe that the functionals $\Phi$ and $\Psi$ are scaling invariant with respect to the domain. Indeed:
\[
T_p(t\Omega)= t^{N+q}T_p(\Omega),\quad |t\Omega|=t^{N}|\Omega|,\quad M(t\Omega)=t^{q}M(\Omega), \quad R_F(t \Omega)=t R_F(\Omega) .
\]
Our main result is the following.
\begin{thm}
\label{main}
Let $\Omega$ be a convex bounded domain in $\R^{N}$. It holds that
\begin{enumerate}
\item[i)]  $\dfrac{q}{N^{q-1}(N+q)} \le\Phi(\Omega)\le \dfrac{q}{q+1}.$

The right-hand side inequality is optimal for a suitable sequence of thinning rectangles.

\item[ii)] $
\displaystyle \frac{1}{N^{q-1}}\frac{1}{N+q} \le \Psi(\Omega) \le \frac{1}{q+1}.$

The left-hand side inequality holds as an equality if and only if $\Omega$ is a Wulff shape, that is a ball in the dual norm $F^{o}$; the right-hand side inequality is optimal for a suitable sequence of thinning rectangles.
\end{enumerate}
\end{thm}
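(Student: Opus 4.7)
The theorem packages four inequalities, which I would approach through three main tools: (a) the weak comparison principle for $\mathcal Q_p$ applied to a maximal inscribed Wulff shape $W_{R_F}(x_0)\subset\Omega$; (b) an anisotropic Payne--Philippin type $P$-function inequality
\[
\tfrac{1}{q} F(\nabla u_\Omega)^p + u_\Omega \le M(\Omega) \quad \text{in } \Omega,
\]
valid on any convex $\Omega$ (obtained by showing that $P:=\tfrac1q F(\nabla u)^p+u$ satisfies a suitable elliptic differential inequality whose maximum is attained at the critical point of $u$); and (c) the variational inequality $T_p/q\ge \int v\,dx-\tfrac1p\int F(\nabla v)^p\,dx$ for any $v\in W_0^{1,p}(\Omega)$, coupled with the eikonal identity $F(\nabla d_F)=1$ a.e.\ for $d_F(x):=\min_{y\in\partial\Omega}F^o(x-y)$ and a Brunn--Minkowski-type concavity property of $t\mapsto|\{d_F>t\}|^{1/N}$.

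For the upper bounds, integrating (b) over $\Omega$ and using $\int F(\nabla u)^p=\int u=T_p$ yields $\tfrac1q T_p+T_p\le M|\Omega|$, i.e.\ $\Phi\le\tfrac{q}{q+1}$. Integrating the same pointwise estimate along the straight segment from the maximum point $x^\ast$ of $u_\Omega$ to its nearest boundary point in the $F^o$-distance (the segment lies in $\Omega$ by convexity), and separating variables in the resulting ODE, gives $M\le d_F(x^\ast)^q/q\le R_F^q/q$; combined with the previous bound this produces $\Psi=\Phi\cdot M/R_F^q\le\tfrac{1}{q+1}$. For the lower bounds, (a) applied to the inscribed Wulff shape yields at once $M\ge R_F^q/(qN^{q-1})$, since the torsion function on $W_{R_F}(x_0)$ has explicit maximum $R_F^q/(qN^{q-1})$ at $x_0$.

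The crucial step is the sharp Wulff-extremal bound $\Psi\ge 1/(N^{q-1}(N+q))$. I would apply (c) to the test function $v(x)=\psi(d_F(x))$ with $\psi(t)=(R_F^q-(R_F-t)^q)/(qN^{q-1})$, so that $\psi'(t)=(R_F-t)^{q-1}/N^{q-1}$ and, by the eikonal identity, $F(\nabla v)^p=(R_F-d_F)^q/N^q$. An integration by parts on the level sets of $d_F$ produces the clean identity
\[
\int_\Omega F(\nabla v)^p\,dx = \tfrac{R_F^q|\Omega|}{N^q} - \tfrac{q}{N}\int_\Omega v\,dx,
\]
so that the variational lower bound on $T_p$ becomes an affine function of $\int v\,dx$. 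The remaining ingredient is the chord inequality
\[
|\{d_F>t\}| \ge |\Omega|\bigl(1-t/R_F\bigr)^N, \qquad t\in[0,R_F],
\]
which follows from the concavity of $t\mapsto|\{d_F>t\}|^{1/N}=|\Omega\ominus tW_1|^{1/N}$ on $[0,R_F]$ (Brunn--Minkowski for the Minkowski difference), and whose equality case characterises the Wulff shape. Substituting this chord bound into the affine inequality and simplifying (using $p(q-1)=q$ and $q(p-1)/p=1$) gives exactly $T_p\ge |\Omega|R_F^q/(N^{q-1}(N+q))$ with equality if and only if $\Omega$ is a Wulff shape. The lower bound $\Phi\ge q/(N^{q-1}(N+q))$ then follows from $\Phi=\Psi\cdot R_F^q/M$ combined with the above sharp lower bound on $\Psi$ and the upper bound $M\le R_F^q/q$.

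The main obstacle I anticipate is aligning the three ingredients in the sharp $\Psi$-bound: the profile $\psi$ must be chosen so that the variational inequality, the integration-by-parts identity, and the Brunn--Minkowski chord bound all saturate simultaneously on the Wulff shape. Once this profile is identified, the remaining three inequalities in the theorem become comparatively routine consequences of the $P$-function inequality and the inscribed-Wulff comparison.
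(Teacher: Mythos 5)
Your overall architecture matches the paper's for most of the statement: the upper bound $\Phi\le q/(q+1)$ by integrating the $\mathcal P$-function inequality $\frac{p-1}{p}F(\nabla u_\Omega)^p+u_\Omega\le M(\Omega)$ over $\Omega$, the bound $M\le R_F^q/q$ by integrating that same inequality along the segment to the nearest boundary point, $M\ge R_F^q/(qN^{q-1})$ by comparison with the inscribed Wulff shape, and the deduction of $\Psi\le 1/(q+1)$ and of the lower bound on $\Phi$ by combining these -- all exactly as in the paper. Where you genuinely diverge is the key lemma $T_p(\Omega)\ge \frac{|\Omega|R_F^q}{N^{q-1}(N+q)}$. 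The paper plugs the test function $\varphi=\frac{1-\mathcal K^o(x)^q}{qN^{q-1}}$ (a profile of the Minkowski gauge of $\Omega$) into the Rayleigh-quotient characterization of $T_p$, the geometric input being $\mathcal K(x)\ge R_F(\Omega)F(x)$, which follows from $\mathcal W_{R_F}\subseteq\Omega$ and immediately yields the rigidity $\mathcal K=R_F F$ in the equality case. You instead use the energy inequality $T_p/q\ge\int v-\frac1p\int F(\nabla v)^p$ with $v=\psi(d_F)$ and the inner-parallel-body bound $|\{d_F>t\}|\ge|\Omega|(1-t/R_F)^N$. I checked your computation: the identity $\int F(\nabla v)^p=\frac{R_F^q|\Omega|}{N^q}-\frac qN\int v$ is correct, the chord bound is correct (it follows from $\lambda\Omega_{-s}+(1-\lambda)\Omega_{-t}\subseteq\Omega_{-(\lambda s+(1-\lambda)t)}$ plus Brunn--Minkowski, or even more simply from the inclusion $\frac{R_F-t}{R_F}(\Omega-x_0)+x_0\subseteq\{d_F>t\}$ for an incenter $x_0$), and the constants close up exactly. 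So your route to the inequality itself is valid and arguably more geometric; the paper's buys a cleaner equality analysis.

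Two genuine gaps remain. First, your equality characterization is wrong as stated: equality in the chord bound $|\{d_F>t\}|\ge|\Omega|(1-t/R_F)^N$ for all $t$ does \emph{not} characterize Wulff shapes -- any simplex (more generally any tangential body of $\mathcal W$) saturates it, since its inner parallel bodies are homothetic copies. To rescue the ``only if'' in ii) you must route the rigidity through equality in the variational inequality, which forces $u_\Omega=\psi(d_F)$ by uniqueness of the minimizer, and then argue that $\psi(d_F)$ can only solve the torsion problem when $\Omega$ is a Wulff shape (e.g.\ because $u_\Omega\in C^{1,\alpha}$ while $\psi(d_F)$ fails to be $C^1$ across the ridge set unless the ridge is contained in $\{d_F=R_F\}$). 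This is fixable but is not the argument you gave. Second, the theorem also asserts optimality of both upper bounds along thinning rectangles; your proposal does not address this at all, whereas the paper devotes a separate proposition to an explicit test function on $]-\ep,\ep[\times]-a_2,a_2[\times\cdots\times]-a_N,a_N[$ showing $T_p/(R_F^q|\Omega|)\to 1/(q+1)$, which simultaneously yields optimality for $\Phi$ and for $M\le R_F^q/q$. A complete proof needs that construction (or an equivalent one).
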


When $F=\mathcal E$ is the Euclidean norm, there is a wide literature on sharp estimates for $T_{p}(\Omega)$ related to several geometrical quantities depending on $\Omega$. For example, in the classical case of the torsional rigidity for the Laplace operator ($p=2$), with $N=2$, it is known that 
\[
\frac{1}{8} \le \Psi(\Omega)= \frac{T_{2}(\Omega)}{R^{2}_{\mathcal E}|\Omega|}\le \frac{1}{3},
\]
where $R_{\mathcal E}(\Omega)$ is the standard Euclidean inradius of $\Omega$. The left-hand side inequality is due to P\'olya and Szeg\H{o} (see \cite{pz}), while the right-hand side inequality was proved by Makai in \cite{m}. 
As regards the case $p\ne 2$, in \cite{fgl}, among other results, estimates for $\Psi(\Omega)$ are given in the planar case, obtaining  an upper bound and a sharp lower bound. In the anisotropic case, in \cite{bgm} the estimates in \textit{ii)} are proved for $p=2$. 

As regards the functional $\Phi(\Omega)$, up to our knowledge, it seems that the only known result is in the Euclidean case for $p=2$. Indeed, in \cite{hlp} the authors prove the following estimates:
\[
\displaystyle \frac{1}{(N+1)^2}\le\Phi(\Omega)=\frac{T_{2}(\Omega)}{|\Omega|M(\Omega)} \le \frac{2}{3}.
\]
Moreover, they show the optimality of the upper bound, while they conjecture that the lower bound is not optimal, and that the sharp constant in the plane is $\frac 13$, achieved on a sequence of thinning isosceles triangles. In our result, we improve the constant ${(N+1)^{-2}}$, replacing it with $[N(N+2)]^{-1}$. Anyway, we believe that $[N(N+2)]^{-1}$ is not optimal, and for $N=2$ we show that there is a sequence of thinning isosceles triangles $\tau_n$ such that $\Phi(\tau_n)\to \frac 13$.

In order to prove our main result, among the main tools involved, the following estimate for the maximum $M(\Omega)$ of the torsion function $u_{\Omega}$ plays a key role. 

\begin{thm}
\label{intro:sperb}
Let $\Omega$ be a bounded convex domain in $ \R^{N}$, $ N \ge 2$, and let $R_F(\Omega)$ the anisotropic inradius of $\Omega$. Let $u_\Omega$ be the solution of \eqref{intro:pb_tor}. For $1<p< +\infty$ it holds that 
\begin{equation}
\label{intro:stima_max}
\frac{R_{F}^{q}(\Omega)}{qN^{q-1}}\le M(\Omega)\le \frac{R_{F}^{q}(\Omega)}{q}.
\end{equation}
The right-hand side inequality is optimal for a suitable sequence of thinning $N$-rectangular domains. The other inequality, holds as an equality if and only if $\Omega$ is the Wulff shape $\mathcal W_{R}(x_{0})$.
\end{thm}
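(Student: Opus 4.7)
The plan is to prove the two inequalities separately, since they have different extremal configurations. For the lower bound I would use the inscribed Wulff ball together with the comparison principle. Let $R := R_F(\Omega)$ and pick $x_0 \in \Omega$ with $\mathcal{W}_R(x_0) \subset \Omega$. A direct computation shows that the torsion function on the Wulff shape is
\[
w(x) = \frac{R^{q} - F^{o}(x - x_{0})^{q}}{q N^{q-1}},
\]
with maximum $R^{q}/(q N^{q-1})$ attained at $x_{0}$. Since $-\Qp u_\Omega = -\Qp w = 1$ in $\mathcal{W}_R(x_0)$ and $u_\Omega \ge 0 = w$ on $\partial\mathcal{W}_R(x_0)$, the weak comparison principle yields $u_\Omega \ge w$ there, whence $M(\Omega) \ge u_\Omega(x_0) \ge R^{q}/(q N^{q-1})$. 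In the equality case the strong comparison principle forces $u_\Omega \equiv w$ in $\mathcal{W}_R(x_0)$; if $\Omega$ strictly contained $\mathcal{W}_R(x_0)$, any point $y \in \partial\mathcal{W}_R(x_0)$ lying in the interior of $\Omega$ would satisfy $u_\Omega(y) = w(y) = 0$, contradicting the strong maximum principle for $u_\Omega$ inside $\Omega$. Hence $\Omega = \mathcal{W}_R(x_0)$.

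For the upper bound I would construct an explicit supersolution from the anisotropic distance $d_F(x) := \inf_{y\in \partial\Omega} F^{o}(x - y)$. Convexity of $\Omega$ expresses $d_F$ as the infimum, over supporting hyperplanes of $\Omega$, of the affine functions measuring $F^{o}$-distance to each hyperplane; in particular $d_F$ is concave and $F(\nabla d_F)=1$ almost everywhere. Set
\[
v(x) := \frac{R^{q} - (R - d_F(x))^{q}}{q},
\]
which is nonnegative, vanishes on $\partial\Omega$, and is bounded above by $R^{q}/q$. Using $F(\nabla d_F) = 1$, the identity $(q-1)(p-1)=1$, the $0$-homogeneity of $F_\xi$, and Euler's relation, a direct computation yields
\[
-\Qp v \;=\; 1 - (R - d_F)\,\dive\bigl(F_\xi(\nabla d_F)\bigr).
\]
The divergence can be rewritten as $\mathrm{tr}\bigl(F_{\xi\xi}(\nabla d_F)\cdot D^{2} d_F\bigr)$, and since $F_{\xi\xi}$ is positive semi-definite while $D^{2}d_F \le 0$ by concavity, this trace is nonpositive. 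Thus $-\Qp v \ge 1$, and the weak comparison principle gives $u_\Omega \le v \le R^{q}/q$, i.e.\ $M(\Omega) \le R^{q}/q$.

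The main technical issue is that $d_F$ is only Lipschitz and typically fails to be $C^{2}$, so the identity for $-\Qp v$ and the nonpositivity of $\dive(F_\xi(\nabla d_F))$ must be read either pointwise almost everywhere (via Alexandrov's second-differentiability theorem for concave functions) together with a weak or viscosity comparison principle for $\Qp$, or handled by approximating $\Omega$ with a sequence of smooth strictly convex domains and passing to the limit in the supersolution inequality; both strategies are by now standard in the anisotropic setting. For the optimality assertions, the lower-bound rigidity is settled above; for the upper bound, a sequence of thinning $N$-rectangles (fixed anisotropic half-width $R$ in one direction, with the remaining side lengths tending to infinity) degenerates onto an anisotropic slab where the torsion function reduces to an explicit one-dimensional profile with maximum exactly $R^{q}/q$, while $R_F \to R$; this forces $M/R_F^{q} \to 1/q$ along the sequence, showing that the right-hand inequality in \eqref{intro:stima_max} is sharp.
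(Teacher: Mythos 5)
Your lower bound is exactly the paper's argument (monotonicity of the maximum under set inclusion plus the explicit radial solution on the inscribed Wulff shape), and your rigidity discussion is actually more detailed than the paper's, which only asserts the equality case; the one point to flag there is that the strong comparison step is applied at the touching point $x_0$ where $\nabla w$ vanishes, so you need a version valid for the degenerate operator $\Qp$ --- the reference \cite{ct} cited in the paper covers this. For the upper bound you take a genuinely different route. The paper proves the pointwise gradient bound $\frac{p-1}{p}F^p(\nabla u_{\Omega})+u_{\Omega}\le M(\Omega)$ via the $\mathcal P$-function of Proposition \ref{cfvprop} combined with the boundary computation \eqref{derivata} (which rests on \eqref{Q_H} and $\mathcal H_F\ge 0$), then integrates the resulting inequality along the segment joining the maximum point to its nearest boundary point to obtain $M(\Omega)\le R_F^q(\Omega)/q$, finishing with an approximation by smooth strictly convex bodies. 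You instead exhibit the explicit barrier $v=\frac{R^{q}-(R-d_F)^{q}}{q}$ and check $-\Qp v\ge 1$ from the concavity of $d_F$, $F(\nabla d_F)=1$, the identity $(p-1)(q-1)=1$ and the $0$-homogeneity of $F_\xi$; your algebra is correct, and the only point requiring genuine care is the distributional inequality $\dive\bigl(F_\xi(\nabla d_F)\bigr)\le 0$ for the merely Lipschitz $d_F$ (mollification of $d_F$, or testing the weak formulation of $\Qp v$ with $(R-d_F)\varphi$, $\varphi\ge 0$), which you acknowledge. Your barrier method is shorter and bypasses the Hopf-lemma/anisotropic-mean-curvature machinery, but it delivers only the $L^\infty$ bound on $u_\Omega$, whereas the paper's $\mathcal P$-function inequality is strictly stronger and is reused later --- integrated over $\Omega$ it gives $\Phi(\Omega)\le q/(q+1)$, which is precisely how the paper then deduces the sharpness of the right-hand inequality through the chain \eqref{catena1} and Proposition \ref{rect}. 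Your sketch of optimality by degenerating onto a slab is plausible but leaves the convergence $M(\Omega_\eps)/R_F(\Omega_\eps)^{q}\to 1/q$ unproved; the cleanest fix is the paper's route of lower-bounding $M$ through $T_p$ on the thinning rectangles.
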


The upper bound in \eqref{intro:stima_max} has been proved in \cite{pay} in the Euclidean case  for $p=2$, $N=2$ (see also \cite{sp}), by using a $\mathcal P$-function computation and a maximum principle. Anyway, many other estimates for the torsion function are known; the interested reader can refer, for example, to \cite{vdB,bor,hlp} and the reference therein contained. We prove inequality \eqref{intro:stima_max} generalizing the $\mathcal P$-function technique to the case $1<p<+\infty$, and in the anisotropic case. 

Finally, we recall that in the Euclidean case, several other estimates for the $p$-torsional rigidity, involving different geometrical quantities, are known (for the Eucidean case, see for instance \cite{vdBBB,vdBFNT,sp} ($p=2$), \cite{fgl} ($1<p<+\infty$), and \cite{dgmana} for the anisotropic case  ($1<p<+\infty$)). 

The paper is organized as follows. In Section 2 we fix some notation and recall prelimiary results about Finsler metrics  and the anisotropic $p$-torsional rigidity. In Section 3 we prove Theorem \ref{intro:sperb} by using the $\mathcal P$-function method. Finally, in Section 4 we give the proof of the main Theorem \ref{main}. We will split it in  several partial results.



\section{Notation and preliminaries}\label{notprel}
Throughout the paper we will consider a convex even 1-homogeneous function 
\[
\xi\in  \R^{N}\mapsto F(\xi)\in [0,+\infty[,
\] 
that is a convex function such that
\begin{equation}
\label{eq:omo}
F(t\xi)=|t|F(\xi), \quad t\in \R,\,\xi \in  \R^{N}, 
\end{equation}
 and such that
\begin{equation}
\label{eq:lin}
a|\xi| \le F(\xi),\quad \xi \in  \R^{N},
\end{equation}
for some constant $a>0$. The hypotheses on $F$ imply there exists $b\ge a$ such that
\begin{equation}
\label{upb}
F(\xi)\le b |\xi|,\quad \xi \in  \R^{N}.
\end{equation}
Moreover, throughout the paper we will assume that $F\in C^{3,\beta}(\mathbb R^{N}\setminus \{0\})$, and
\begin{equation}
\label{strong}
[F^{p}]_{\xi\xi}(\xi)\text{ is positive definite in } \R^{N}\setminus\{0\},
\end{equation}
with $1<p<+\infty$. 

The hypothesis \eqref{strong} on $F$ ensures that the operator 
\[
\Qp u:= \dive \left(\frac{1}{p}\nabla_{\xi}[F^{p}](\nabla u)\right)
\] 
is elliptic, hence there exists a positive constant $\gamma$ such that
\begin{equation*}
\frac1p\sum_{i,j=1}^{n}{\nabla^{2}_{\xi_{i}\xi_{j}}[F^{p}](\eta)
  \xi_i\xi_j}\ge
\gamma |\eta|^{p-2} |\xi|^2, 
\end{equation*}
for some positive constant $\gamma$, for any $\eta \in
\Rn\setminus\{0\}$ and for any $\xi\in \Rn$. 
\begin{rem}
We stress that for $p\ge 2$ the condition  
\begin{equation*}
\nabla^{2}_{\xi}[F^{2}](\xi)\text{ is positive definite in } \R^{N}\setminus\{0\},
\end{equation*}
implies \eqref{strong}.
\end{rem}
The polar function $F^o\colon \R^N \rightarrow [0,+\infty[$ 
of $F$ is defined as
\begin{equation*}
F^o(v)=\sup_{\xi \ne 0} \frac{\langle \xi, v\rangle}{F(\xi)}. 
\end{equation*}
 It is easy to verify that also $F^o$ is a convex function
which satisfies properties \eqref{eq:omo} and
\eqref{eq:lin}. Furthermore, 
\begin{equation*}
F(v)=\sup_{\xi \ne 0} \frac{\langle \xi, v\rangle}{F^o(\xi)}.
\end{equation*}
From the above property it holds that
\begin{equation}
\label{imp}
|\langle \xi, \eta\rangle| \le F(\xi) F^{o}(\eta), \qquad \forall \xi, \eta \in  \R^{N}.
\end{equation}
The set
\[
\mathcal W = \{  \xi \in  \R^N \colon F^o(\xi)< 1 \}
\]
is the so-called Wulff shape centered at the origin. We put
$\kappa_n=|\mathcal W|$, where $|\mathcal W|$ denotes the Lebesgue measure
of $\mathcal W$. More generally, we denote with $\mathcal W_r(x_0)$
the set $r\mathcal W+x_0$, that is the Wulff shape centered at $x_0$
with measure $\kappa_nr^n$, and $\mathcal W_r(0)=\mathcal W_r$.

We observe that $F$ is the support function  of $\overline{\mathcal W}$. 
In  general  for a nonempty closed convex set $K \subset \R^N$, the support function $h_K$ is defined by
\begin{equation}
\label{suppfunc}
h_K(x):=\sup \{ \langle x, \xi \rangle, \xi \in K\}, \quad \text{ for } x \in \R^N .
\end{equation}
The following properties of $F$ and $F^o$ hold true:
\begin{gather}
\label{prima}
 \langle F_\xi(\xi) , \xi \rangle= F(\xi), \quad  \langle F_\xi^{o} (\xi), \xi \rangle
= F^{o}(\xi),\qquad \forall \xi \in
 \R^N\setminus \{0\}
 \\
 \label{seconda} F(  F_\xi^o(\xi))=F^o(  F_\xi(\xi))=1,\quad \forall \xi \in
 \R^N\setminus \{0\}, 
\\
\label{terza} 
F^o(\xi)   F_\xi( F_\xi^o(\xi) ) = F(\xi) 
 F_\xi^o\left(  F_\xi(\xi) \right) = \xi\qquad \forall \xi \in
 \R^N\setminus \{0\}. 
\end{gather}

\subsection{Anisotropic mean curvature}
Let $\Omega$ be a $C^{2}$ bounded domain, and $\nu(x)$ be the unit outer normal at $x\in\de\Omega$, and let $u\in C^{2}(\overline\Omega)$ such that $\Omega_t=\{u>t\}$, $\de \Omega_t=\{u=t\}$ and $\nabla u\ne 0$ on $\de \Omega_t$.  The anisotropic outer normal $n_{F}$ to $\de \Omega_t$ is given by
  \[
  n_F(x)= F_{\xi}(\nu(x))= F_{\xi}\left(-\nabla u\right),\quad x\in \de \Omega.
  \]
It holds 
 \[
  F^o(n_F)=1.  
  \]
  The anisotropic mean curvature of $\partial \Omega_t$ is  defined as
  \begin{equation}
  \label{H_F} 
  \mathcal H_{F}(x)= \dive\left( n_{F}(x)\right)=
  \dive\left[ \nabla_{\xi}
    F\left(-{\nabla u(x)}\right) \right], \quad x\in
  \de \Omega_t.
  \end{equation}
It holds that
\begin{equation}
\label{der_nf}
\frac{\de u}{\de n_F}= \nabla u\cdot F_{\xi}(-\nabla u)= -F(\nabla u). 
\end{equation}
In \cite{xiath} it has been proved that for a smooth function $u$, on its level sets $\{u=t\}$ it holds
\begin{equation}
\label{xiaformula}
\mathcal Q_2 u=  \frac{\de u}{\de n_F}\mathcal H_F+\frac{\de^2 u}{\de n_F^2},
\end{equation}
where $\frac{\de u}{\de n_F}=\nabla u\cdot n_F$. In the next result we generalize \eqref{xiaformula} for $\mathcal Q_p u$.
\begin{prop}
Let $u$ be a $C^2(\overline\Omega)$ function with a regular level set $\de\Omega_t$. Then we have
\begin{equation}
\label{Q_H}
\mathcal Q_p u=F^{p-2}(\nabla u) \left(  \frac{\de u}{\de n_F}\mathcal H_F+(p-1)\frac{\de^2 u}{\de n_F^2}\right),
\end{equation}
where $\mathcal H_F$ is the anisotropic mean curvature of $\de\Omega_{t}$ as defined in \eqref{H_F}.
\end{prop}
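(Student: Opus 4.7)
The plan is to derive \eqref{Q_H} by a direct computation, expanding the definition of $\mathcal Q_p u = \dive\bigl(F(\nabla u)^{p-1} F_\xi(\nabla u)\bigr)$ via the product rule, and then carefully identifying the two resulting pieces with the normal-curvature term and the normal-second-derivative term respectively. The underlying strategy is the same one that produces Xia's formula \eqref{xiaformula} in the case $p=2$; the extra factor $(p-1)$ in the second summand should appear naturally when one differentiates $F(\nabla u)^{p-1}$.

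First I would write
\[
\mathcal Q_p u \;=\; \nabla\bigl[F(\nabla u)^{p-1}\bigr]\cdot F_\xi(\nabla u)\;+\;F(\nabla u)^{p-1}\,\dive\bigl[F_\xi(\nabla u)\bigr].
\]
Since $F$ is even, $F_\xi$ is odd, so $F_\xi(\nabla u)=-F_\xi(-\nabla u)=-n_F$; consequently $\dive[F_\xi(\nabla u)]=-\mathcal H_F$ by the definition \eqref{H_F}. Combining this with \eqref{der_nf}, namely $F(\nabla u)=-\partial u/\partial n_F$, the second summand becomes
\[
F(\nabla u)^{p-1}\bigl(-\mathcal H_F\bigr)\;=\;F(\nabla u)^{p-2}\,\frac{\partial u}{\partial n_F}\,\mathcal H_F,
\]
which is exactly the first term on the right-hand side of \eqref{Q_H}.

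For the remaining piece I would apply the chain rule, $\nabla[F(\nabla u)^{p-1}]=(p-1)F(\nabla u)^{p-2}(\nabla^2 u)\,F_\xi(\nabla u)$, and then pair it with $F_\xi(\nabla u)=-n_F$:
\[
\nabla[F(\nabla u)^{p-1}]\cdot F_\xi(\nabla u)
\;=\;(p-1)F(\nabla u)^{p-2}\,\bigl\langle(\nabla^2 u)\,n_F,\,n_F\bigr\rangle
\;=\;(p-1)F(\nabla u)^{p-2}\,\frac{\partial^2 u}{\partial n_F^2},
\]
using that $\partial^2 u/\partial n_F^2=n_F^{T}(\nabla^2 u)\,n_F$ since $n_F$ is independent of the base point along a normal line to the level set (the usual identity for second derivatives along a direction field whose derivative only depends on the direction itself). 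Summing the two contributions yields \eqref{Q_H}.

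The only step I expect to require care is the sign bookkeeping coming from the oddness of $F_\xi$ (equivalently, from the fact that the outward normal to $\{u>t\}$ is $-\nabla u/|\nabla u|$); in particular the cancellation of two minus signs in the first summand—one from $\dive[F_\xi(\nabla u)]=-\mathcal H_F$ and one from $F(\nabla u)=-\partial u/\partial n_F$—is exactly what produces the correct sign structure in \eqref{Q_H}. Everything else is bilinear algebra on the Hessian of $u$ and routine chain-rule differentiation.
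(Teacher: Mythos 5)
Your computation is correct and is, at bottom, the same product-rule expansion of $\dive\left(F^{p-1}(\nabla u)F_\xi(\nabla u)\right)$ that the paper performs, but you organize it differently: the paper factors out $F^{p-2}(\nabla u)$, recognizes $\dive\left(F(\nabla u) F_\xi(\nabla u)\right)=\mathcal Q_2 u$, invokes Xia's formula \eqref{xiaformula} for the case $p=2$, and then absorbs the extra term $(p-2)F_{\xi_i}(\nabla u)F_{\xi_j}(\nabla u)u_{x_ix_j}$ into the normal second derivative; you instead split off the full factor $F^{p-1}(\nabla u)$ and identify $\dive\left(F_\xi(\nabla u)\right)=-\mathcal H_F$ directly from the definition \eqref{H_F} (using that $F_\xi$ is odd), together with \eqref{der_nf}. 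What your route buys is independence from the cited formula \eqref{xiaformula} --- indeed it reproves it when $p=2$ --- while the paper's route makes the relation to the Finsler Laplacian case transparent. One small caveat: the identification $\frac{\de^2 u}{\de n_F^2}=n_F^{T}(\nabla^2 u)\,n_F=F_{\xi_i}(\nabla u)F_{\xi_j}(\nabla u)u_{x_ix_j}$ is simply the meaning the paper attaches to this symbol (it is exactly what makes the last line of the paper's own proof work), so you should not try to justify it by the somewhat shaky remark that $n_F$ is constant along normal lines; it is a notational convention, not a geometric fact to be established. With that understood, your sign bookkeeping and the chain-rule step $\nabla\left[F^{p-1}(\nabla u)\right]=(p-1)F^{p-2}(\nabla u)\,(\nabla^2 u)\,F_\xi(\nabla u)$ are both right, and the two contributions sum to \eqref{Q_H}.
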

\begin{proof}
 By definition of $\mathcal{Q}_p$, \eqref{xiaformula} and \eqref{der_nf}, we have
\begin{align*}
    \mathcal{Q}_p u&= \dive\left(F^{p-2}(\nabla u) F(\nabla u)F_\xi(\nabla u)\right)\\
    &=
    F^{p-2}(\nabla u) \left( Q_2u +(p-2) F_{\xi_i}(\nabla u)F_{\xi_j}(\nabla u)u_{x_ix_j}\right)\\
    &=F^{p-2}(\nabla u) \left( \frac{\de u}{\de n_F}\mathcal H_F+\frac{\de^2 u}{\de n_F^2} +(p-2) F_{\xi_i}(\nabla u)F_{\xi_j}(\nabla u)u_{x_ix_j}\right)\\
    &=F^{p-2}(\nabla u) \left( \frac{\de u}{\de n_F}\mathcal H_F+(p-1)\frac{\de^2 u}{\de n_F^2}\right),
\end{align*}
that is the thesis.
\end{proof}

Finally we recall the definition of the anisotropic distance from the boundary and the anisotropic inradius.

Let us consider a domain $\Omega$, that is a connected open set of $ \R^N$, with non-empty boundary. 

The anisotropic distance of $x\in\overline\Omega$ to the boundary of $\de \Omega$ is the function 
\begin{equation*}
d_{F}(x)= \inf_{y\in \de \Omega} F^o(x-y), \quad x\in \overline\Omega.
\end{equation*}

We stress that when $F=|\cdot|$ then $d_F=d_{\mathcal{E}}$, the Euclidean distance function from the boundary.

It is not difficult to prove that $d_{F}$ is a uniform Lipschitz function in $\overline \Omega$ and, using the property of $F$ we have
\begin{equation*}
  F(\nabla d_F(x))=1 \quad\text{a.e. in }\Omega.
\end{equation*}
Obviously, assuming $\sup_{\Omega} d_{F}<+\infty$, $d_F\in W_{0}^{1,\infty}(\Omega)$ and the quantity
\begin{equation}
\label{inrad}
R_{F}(\Omega)=\sup \{d_{F}(x),\; x\in\Omega\},
\end{equation}
is called  anisotropic inradius of $\Omega$.

For further properties of the anisotropic distance function we refer
the reader to \cite{cm07}.

\subsection{Anisotropic \texorpdfstring{$p$}{TEXT}-torsional rigidity}
In this subsection we summarize some properties of the anisotropic $p$-torsional rigidity. We refer the reader to \cite{dgmana} for further details.

Let $\Omega$ be a bounded domain in $ \R^{N}$, and $1<p<+\infty$. Throughout the paper we will denote by $q$ the H\"older conjugate of $p$,
\[
q:=\frac{p}{p-1}.
\]
Let us consider the torsion problem for the anisotropic $p-$Laplacian
\begin{equation}
\label{pb_tor}
\left\{
\begin{array}{ll}-\mathcal Q_{p}u:=-\dive \left(F^{p-1}(\nabla u) F_\xi (\nabla u)\right)=1  &\text{in}\ \Omega \\
u=0 &\text{on}\ \partial\Omega.
\end{array}
\right.
\end{equation}
 By classical result there exists  a unique solution of \eqref{pb_tor}, that we will always denote by $u_{\Omega}$, which is positive in $\Omega$. Moreover, by \eqref{strong} and being $F\in C^{3}(\R^{n}\setminus \{0\})$, then $u_{\Omega}\in C^{1,\alpha}(\Omega)\cap C^{3}(\{\nabla u_{\Omega}\ne 0\})$ (see \cite{ladyz,tk84}).

In view of the above considerations, we define the $p$-torsional
anisotropic rigidity of $\Omega$ the number $T_p(\Omega)>0$ such
that  
\begin{equation}
  \label{eq:ptor1}
  T_p(\Omega)=\int_\Omega F(\nabla u_{\Omega})^p dx = \int_\Omega u_{\Omega} dx.
\end{equation}

A characterization of $T_p$ is provided by the equality
$T_p(\Omega)=\sigma(\Omega)^{\frac{1}{p-1}}$, where
$\sigma(\Omega)$ is the best constant in the Sobolev inequality
\[
\|\varphi\|_{L^1(\Omega)}^p \le \sigma(\Omega) \|F(\nabla \varphi)\|^p_{L^p(\Omega)},
\]
that is
\begin{equation}
  \label{tors0}
T_p(\Omega)^{p-1} = \sigma(\Omega)= \max_{\substack{\psi \in
    W_0^{1,p}(\Omega) \setminus\{0\}}}
\dfrac{\left(\displaystyle\int_\Omega |\psi| \, 
    dx\right)^p}{\displaystyle\int_\Omega F(\nabla\psi)^p dx},
\end{equation}
and the solution $u_{\Omega}$ of \eqref{pb_tor} realizes the maximum
in \eqref{tors0}. 

It is immediate to see that if $\Omega\subset\tilde\Omega$, then
\begin{equation}
\label{monotonia}
T_p(\Omega)\le T_p(\tilde\Omega).
\end{equation}
Moreover, by the maximum principle it holds that
\begin{equation}
\label{monotoniamax}
M(\Omega)\le M(\tilde\Omega),
\end{equation}
where $M(\Omega)$ is the maximum of the torsion function in $\Omega$.

A consequence of the anisotropic P\'olya-Szeg\H o inequality (see \cite{aflt}) is the following upper bound for $T_p(\Omega)$ in terms of the measure of $\Omega$.
\begin{thm}
  Let $\Omega$ be a bounded open set of $\mathbb R^N$. Then,
  \begin{equation}
    \label{uptor}
  T_p(\Omega) \le T_p(\mathcal W_R),
  \end{equation} 
  where $\mathcal W_R$ is the Wulff shape centered at the origin with
  the same Lebesgue measure as $\Omega$. 
\end{thm}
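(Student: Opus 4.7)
The plan is to deduce the upper bound directly from the variational characterization \eqref{tors0} combined with the anisotropic Pólya–Szegő inequality of \cite{aflt}. First I would let $u_\Omega \in W_0^{1,p}(\Omega)$ denote the torsion function associated with $\Omega$ and consider its convex (Wulff) rearrangement $u^\star$, i.e.\ the unique radially symmetric function (with respect to $F^o$) on $\mathcal W_R$ which is equimeasurable with $u_\Omega$. Then $u^\star \in W_0^{1,p}(\mathcal W_R)$ and, since rearrangement preserves the distribution function, $\int_{\mathcal W_R} u^\star\, dx = \int_\Omega u_\Omega\, dx$.

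Next I would invoke the anisotropic Pólya–Szegő principle \cite{aflt}, which states
\[
\int_{\mathcal W_R} F(\nabla u^\star)^p\, dx \le \int_\Omega F(\nabla u_\Omega)^p\, dx.
\]
Recall that by testing the equation in \eqref{pb_tor} against $u_\Omega$ and using \eqref{eq:ptor1}, one has $\int_\Omega F(\nabla u_\Omega)^p\, dx = \int_\Omega u_\Omega\, dx = T_p(\Omega)$.

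Now I would apply the variational characterization \eqref{tors0} on the Wulff shape $\mathcal W_R$, using $u^\star$ as a test function:
\[
T_p(\mathcal W_R)^{p-1} \ge \frac{\bigl(\int_{\mathcal W_R} u^\star\, dx\bigr)^p}{\int_{\mathcal W_R} F(\nabla u^\star)^p\, dx} \ge \frac{\bigl(\int_\Omega u_\Omega\, dx\bigr)^p}{\int_\Omega F(\nabla u_\Omega)^p\, dx} = \frac{T_p(\Omega)^p}{T_p(\Omega)} = T_p(\Omega)^{p-1}.
\]
Raising both sides to the power $1/(p-1)$ yields the inequality \eqref{uptor}.

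There is no serious obstacle here: all the analytic work is absorbed into the cited anisotropic Pólya–Szegő inequality. The only point requiring a little care is the equimeasurability statement $\int_{\mathcal W_R} u^\star = \int_\Omega u_\Omega$, which is standard since $u^\star$ is defined precisely so that its superlevel sets are Wulff shapes with the same measure as those of $u_\Omega$. I would simply state this and refer to \cite{aflt, dgmana} for the detailed construction of the convex rearrangement in the anisotropic setting.
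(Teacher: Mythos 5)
Your proof is correct and is exactly the argument the paper intends: the paper states this theorem without proof, merely noting it is "a consequence of the anisotropic Pólya--Szegő inequality" of \cite{aflt}, and your write-up (rearrange $u_\Omega$ into the Wulff-symmetric $u^\star$, preserve the $L^1$ norm, decrease the anisotropic Dirichlet energy, and plug into the variational characterization \eqref{tors0}) is the standard way to carry that out.
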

\begin{rem}
\label{pb_rad}
If $\Omega = \mathcal W_R$, by the simmetry of the problem, $T_p(\mathcal W_R)$ and the solution $u$ of \eqref{pb_tor} can be explicity calculated. We have:
\begin{equation}
\label{pb_rad2}
u_{\mathcal W}(x)= \frac{R^{q}- F^{o}(x)^q}{qN^{q-1}} \quad\text{and}\quad  T_p(\mathcal W_R)=\frac{1}{N^{q-1}} \frac{|\mathcal W_R|}{N+q}{R^q}.
\end{equation}
\end{rem}
\begin{rem}
We point out that the lower bound in statement \textit{ii)} of Theorem \ref{main} gives a stability type inequality for  \eqref{uptor}. Indeed we have
\[
0 \le T_p(\mathcal W_R)- T_p(\Omega) \le \frac{1}{8} \left( R^2-R^2_F(\Omega)\right),
\]
where $|\mathcal W_R|=|\Omega|.$

\end{rem} 
 
\section{An estimate of the maximum of the torsion function}
In order to give a sharp upper bound  for the maximum $M(\Omega)$ of the torsion function $u_{\Omega}$, we will take into account the following  $\mathcal P$-function:
\begin{equation*}
\mathcal P(x)= \frac{p-1}{p} F^p (\nabla u_{\Omega}) + u_{\Omega} - M(\Omega),
\end{equation*}
where $M(\Omega)=\max_{\Omega} u_{\Omega}$. The following result is proved in \cite{cfv}.
\begin{prop}
\label{cfvprop}
Let $\Omega$ be a domain in $ \R^{N}$, $ N\ge 2$, and $u_{\Omega}\in W_{0}^{1,p}(\Omega)$ be a solution of \eqref{pb_tor}. Set
\[
d_{ij}:= \frac{1}{F(\nabla u_{\Omega})}\de_{\xi_{i}\xi_{j}}\left[\frac{F^{p}}{p}\right](\nabla u_{\Omega}),
\]
Then it holds that
\[
\left( d_{ij}\mathcal P_{i} \right)_{j} -b_{k}\mathcal P_{k} \ge 0 \quad\text{ in }\{\nabla u_{\Omega}\ne 0 \}
\]
where
\[
b_{k}=\frac{p-2}{F^{3}(\nabla u_{\Omega})} F_{\xi_{\ell}}(\nabla u_{\Omega})\mathcal P_{x_{\ell}} F_{\xi_{k}}(\nabla u_{\Omega})+\frac{2p-3}{F^{2}(\nabla u_{\Omega})} \left(  \frac{F_{\xi_{k}\xi_{\ell}}(\nabla u_{\Omega})\mathcal P_{x_{\ell}}}{p-1} -F_{\xi_{k}}(\nabla u_{\Omega})\right)
\]
\end{prop}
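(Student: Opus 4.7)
The plan is to verify the stated differential inequality by a direct pointwise computation, which is a Bernstein/$\mathcal P$-function calculation adapted to the anisotropic $p$-Laplacian. Since everything is semi-algebraic in $\nabla u_\Omega$ and $D^2 u_\Omega$, I would work in the open set $\{\nabla u_\Omega\ne 0\}$ where $u_\Omega\in C^3$. First I compute
\[
\mathcal P_{x_i} = (p-1)\,F^{p-1}(\nabla u_\Omega)\,F_{\xi_k}(\nabla u_\Omega)\,u_{x_k x_i} + u_{x_i},
\]
so that $d_{ij}\mathcal P_{x_i}$ already encodes, up to the factor $F(\nabla u_\Omega)$, the linearization of $\mathcal Q_p$ applied to $\partial_{x_k}u_\Omega$ together with a Newtonian-potential piece from the $u$-term of $\mathcal P$. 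Expanding $(d_{ij}\mathcal P_{x_i})_{x_j}$ then produces three kinds of terms: a quadratic form in the Hessian $D^2 u_\Omega$, a piece linear in the third derivatives $D^3 u_\Omega$, and lower-order contributions involving $\nabla u_\Omega$ and $D^2 u_\Omega$.

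The next step is to eliminate the $D^3 u_\Omega$ contributions by differentiating the torsion equation. Writing $\mathcal Q_p u_\Omega = F(\nabla u_\Omega)\,d_{ij}\,u_{x_i x_j} = -1$ and differentiating in $x_k$, one obtains an identity that, after contraction with $F_{\xi_k}(\nabla u_\Omega)$ and simplification via the Euler-type identities \eqref{prima}--\eqref{terza}, allows one to substitute every $u_{x_i x_j x_k}$-term appearing in the expansion. What is left is a quadratic form in $D^2 u_\Omega$, whose coefficient matrix is controlled by $[F^p]_{\xi\xi}(\nabla u_\Omega)$, plus a first-order combination of $u_{x_i}$ and $F_{\xi_k}(\nabla u_\Omega)u_{x_k x_i}$. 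The non-negativity of the quadratic piece follows from an anisotropic Cauchy--Schwarz inequality in the metric induced by the positive-definite matrix $d_{ij}$ (using \eqref{strong}), exactly in the spirit of $(\mathrm{tr}\,A)^2 \le N\,\mathrm{tr}(A^2)$ applied to the Hessian measured with respect to $d_{ij}$. The leftover first-order combination then re-expresses, by inverting the formula for $\mathcal P_{x_i}$ above, as $b_k\mathcal P_{x_k}$ with exactly the $b_k$ stated.

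The main obstacle is purely combinatorial bookkeeping in the regime $p\neq 2$. Each time a factor of $F^{p-2}(\nabla u_\Omega)$ is differentiated inside $\mathcal Q_p$, one picks up a contribution proportional to $(p-2)$, and these contributions are precisely what survive as the rank-one piece $\frac{p-2}{F^{3}(\nabla u_\Omega)}\,F_{\xi_\ell}(\nabla u_\Omega)\,\mathcal P_{x_\ell}\,F_{\xi_k}(\nabla u_\Omega)$ in $b_k$. The coefficient $(2p-3)$ in the second summand of $b_k$ then emerges from recombining this $(p-2)$-contribution with the $(p-1)$-contribution coming from the $F_{\xi\xi}$-term in the linearization of $F^p$, so that careful tracking of the two cross-terms is essential. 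Beyond this accounting, the only place where actual analysis enters is the Cauchy--Schwarz step that yields the sign of the Hessian quadratic form; everything else is algebraic manipulation organized by the homogeneity relations \eqref{prima}--\eqref{terza}.
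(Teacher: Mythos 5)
First, note that the paper does not prove this proposition at all: it is quoted verbatim from \cite{cfv} (Cozzi--Farina--Valdinoci), so any self-contained argument is necessarily a ``different route'' from the paper's. Your overall strategy --- differentiate $\mathcal P$, expand $(d_{ij}\mathcal P_i)_j$, eliminate third derivatives by differentiating the equation, control the Hessian quadratic form by a Cauchy--Schwarz argument, and absorb the remainder into the drift $b_k\mathcal P_k$ --- is indeed the strategy of the cited reference, and the formula you give for $\mathcal P_{x_i}$ is correct.

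However, as written the proposal has two genuine gaps. The first is that the decisive identity --- that after the substitutions the leftover first-order terms are \emph{exactly} $b_k\mathcal P_k$ with the stated coefficients $\tfrac{p-2}{F^3}$, $\tfrac{2p-3}{F^2}$ and the internal factor $\tfrac{1}{p-1}$ --- is asserted rather than computed; since that identity \emph{is} the proposition, a plan that defers it to ``combinatorial bookkeeping'' is not yet a proof. The second, more substantive, is that the one analytic step you do name is the wrong inequality. The trace inequality $(\operatorname{tr}A)^2\le N\operatorname{tr}(A^2)$ carries a dimensional constant and cannot close this argument: already in the model case $p=2$, $F=|\cdot|$, it only yields $|D^2u|^2\ge (\Delta u)^2/N=1/N$, whereas the $\mathcal P$-function $\tfrac{p-1}{p}F^p(\nabla u)+u$ (with no factor $N^{-(q-1)}$ in front of the gradient term) requires the full lower bound $|D^2u|^2\ge 1-\tfrac{\nabla u\cdot\nabla\mathcal P}{|\nabla u|^2}$. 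That bound comes from the \emph{rank-one} (vector) Cauchy--Schwarz inequality --- projecting the Hessian onto the direction $F_\xi(\nabla u)$ in the metric $[F^p]_{\xi\xi}$, i.e.\ $\langle Hv,w\rangle_A^2\le\langle Hv,Hv\rangle_A\langle w,w\rangle_A$ --- combined with rewriting $D^2u\,F_\xi(\nabla u)$ in terms of $\nabla\mathcal P$ and $\nabla u$ and using the equation $\operatorname{tr}(A\,D^2u)=-1$ to produce the constant that cancels the derivative of the $u$-term. Replacing the trace inequality by this projection inequality (and then actually carrying out the coefficient matching for general $p$ and $F$) is what is needed to turn the sketch into a proof; with the trace inequality the argument would only yield the weaker, $N$-dependent maximum principle.
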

As a consequence of the previous result we get the following maximum principle for $\mathcal P$.
\begin{thm}
\label{prsperb}
Let $\Omega$ be a bounded $C^{2}$ domain in $ \R^{N}$, $ N \ge 2$, with nonnegative anisotropic mean curvature $\mathcal H_{F}\ge 0$ on $\de\Omega$, and $u_{\Omega}$ the torsion function. Then 
\begin{equation*}
\frac{p-1}{p}F^p(\nabla u_{\Omega})+u_{\Omega} \le M(\Omega)  \quad \text{ in } \overline{\Omega},
\end{equation*}
that is  the function $\mathcal P$ achieves its maximum at the points $x_M \in \Omega$ such that  $u_{\Omega}(x_M)=M(\Omega)$.
\end{thm}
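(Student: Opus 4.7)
The plan is to run the classical $\mathcal P$-function maximum-principle argument, using Proposition~\ref{cfvprop} together with the sign assumption $\mathcal H_F \ge 0$ on $\de\Omega$. On the open set $\Omega^{*}:=\{\nabla u_\Omega \neq 0\}$ the function $\mathcal P$ satisfies the linear elliptic differential inequality of Proposition~\ref{cfvprop}, so the strong maximum principle prevents $\mathcal P$ from attaining an interior maximum there unless it is locally constant. Hence the maximum of $\mathcal P$ on $\overline\Omega$ is achieved either at a critical point of $u_\Omega$---where $\mathcal P = u_\Omega - M(\Omega)\le 0$, with equality at the argmax of $u_\Omega$---or at a point of $\de\Omega$, and the theorem reduces to excluding that $\mathcal P$ strictly exceeds $0$ on the boundary.

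Assume by contradiction that $x_0\in\de\Omega$ is a maximum of $\mathcal P$ on $\overline\Omega$ with $\mathcal P(x_0)>0$. Standard regularity for the $p$-Laplace-type torsion problem on the $C^{2}$ domain $\Omega$ guarantees $\nabla u_\Omega(x_0)\ne 0$, so the operator of Proposition~\ref{cfvprop} is uniformly elliptic near $x_0$ and the classical Hopf boundary lemma yields $\frac{\de \mathcal P}{\de \nu}(x_0)>0$, where $\nu$ is the Euclidean outer unit normal. Since $x_0$ is simultaneously a maximum of $\mathcal P|_{\de\Omega}$, its tangential gradient vanishes and $\nabla\mathcal P(x_0)$ is parallel to $\nu$; using $n_F\cdot\nu=F_\xi(\nu)\cdot\nu=F(\nu)>0$ from \eqref{prima}, this forces $\frac{\de \mathcal P}{\de n_F}(x_0)=F(\nu)\,\frac{\de \mathcal P}{\de \nu}(x_0)>0$.

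To contradict this I would compute $\de \mathcal P/\de n_F$ intrinsically on $\de\Omega$. Writing $n_F=-F_\xi(\nabla u_\Omega)$ (from the oddness of $F_\xi$ and the sign convention of $\nu$) and using the $0$-homogeneity of $F_\xi$ gives the identity $\de_{n_F} F(\nabla u_\Omega)=-\de_{n_F}^{2} u_\Omega$; combined with \eqref{der_nf} one obtains
\[
\frac{\de \mathcal P}{\de n_F}=-(p-1)F^{p-1}(\nabla u_\Omega)\,\frac{\de^{2} u_\Omega}{\de n_F^{2}}-F(\nabla u_\Omega).
\]
Solving the boundary PDE $\mathcal Q_p u_\Omega=-1$ via \eqref{Q_H} for $(p-1)F^{p-2}(\nabla u_\Omega)\,\de_{n_F}^{2} u_\Omega=-1+F^{p-1}(\nabla u_\Omega)\mathcal H_F$ and substituting collapses the expression to
\[
\frac{\de \mathcal P}{\de n_F}=-F^{p}(\nabla u_\Omega)\,\mathcal H_F\le 0 \qquad \text{on } \de\Omega,
\]
since $\mathcal H_F\ge 0$ by hypothesis. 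This contradicts the Hopf inequality, so no strictly positive boundary maximum of $\mathcal P$ can exist, and together with the critical-point analysis we conclude $\mathcal P\le 0$ throughout $\overline\Omega$.

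The step I expect to be the main obstacle is this boundary identity: it is essential that the $-F(\nabla u_\Omega)$ coming from $\de_{n_F} u_\Omega$ is exactly cancelled by the $F(\nabla u_\Omega)$ produced by the $-1$ of the PDE after multiplication by $F(\nabla u_\Omega)$, so that a single clean term $-F^{p}(\nabla u_\Omega)\mathcal H_F$ remains in which the sign hypothesis can be directly exploited. Keeping the signs straight in $\de_{n_F} F(\nabla u_\Omega)$, $\de_{n_F}^{2} u_\Omega$ and $n_F$ through the evenness of $F$ and the oddness of $F_\xi$ is the most delicate bookkeeping in the argument. A secondary point is verifying $\nabla u_\Omega\ne 0$ at the putative boundary maximum so that the non-degenerate Hopf lemma applies, but this is standard for $p$-Laplacian-type torsion problems on $C^{2}$ domains.
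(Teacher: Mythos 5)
Your proposal is correct and follows essentially the same route as the paper: the maximum principle from Proposition~\ref{cfvprop} on $\{\nabla u_\Omega\neq 0\}$, the observation that $\mathcal P\le 0$ at critical points of $u_\Omega$, and the exclusion of a boundary maximum by combining the Hopf lemma with the identity $\frac{\de\mathcal P}{\de n_F}=-F^{p}(\nabla u_\Omega)\,\mathcal H_F\le 0$ on $\de\Omega$ obtained from \eqref{Q_H} and \eqref{der_nf}. (The paper states this boundary quantity with exponent $p-1$ rather than $p$, but the sign conclusion is the same either way, so the discrepancy is immaterial.)
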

\begin{proof}
Let us denote by $E_{u_{\Omega}}$ the set of the critical points of $u_{\Omega}$, that is $E_{u_{\Omega}}=\{x \in \overline\Omega \colon \nabla u_{\Omega}(x) =0\}$. Being $\de \Omega$ $C^{2}$, by the Hopf Lemma (see for example \cite{ct}), $E_{u_{\Omega}}\cap \de \Omega=\emptyset$.

Applying Proposition \ref{cfvprop}, the function  $\mathcal P$ verifies a maximum principle in the open set $\Omega \setminus E_{u_{\Omega}}$. Then we have
\[
\max_{\Omega \setminus E_{u_{\Omega}}} \mathcal P = \max_{\de\left(\Omega \setminus E_{u_{\Omega}}\right)}\mathcal P.
\]
Hence one of the following three cases occur
\begin{enumerate}
\item the maximum point of $\mathcal P$ is on $\de \Omega$;
\item the maximum point of $\mathcal P$ is on $E_u$;
\item the function $\mathcal P $ is constant in $\overline \Omega$.
\end{enumerate}
In order to prove the theorem we have to show that  statement 1 cannot happen.
Let us compute the derivative of $\mathcal P$ in the direction of the anisotropic normal $n_F$,  in the sense of  \eqref{der_nf}. Hence we get
\begin{multline}
\label{derivata}
\frac{\de \mathcal P}{\de n_F}= \dfrac{p-1}{p}\frac{\de}{\de n_F}\left(-\frac{\de u_{\Omega}}{\de n_F}\right)^p+\frac{\de u_{\Omega}}{\de n_F} =-(p-1) \left(-\frac{\de u}{\de n_F}\right)^{p-1} \frac{\de^2 u_{\Omega}}{\de n_F^2}+\frac{\de u_{\Omega}}{\de n_F}=\\= -F(\nabla u_{\Omega}) \mathcal Q_p[u]-F^{p-1}(\nabla u_{\Omega}) \mathcal H_F-F(\nabla u_{\Omega})=-F^{p-1}(\nabla u_{\Omega}) \mathcal H_F,
\end{multline}
where last identity follows by \eqref{Q_H}. On the other hand, if a maximum point $\bar x$ of $\mathcal P$ is on $\de\Omega$, by Hopf Lemma either $\mathcal P$ is constant in $\overline\Omega$, or $\frac{\de \mathcal P}{\de n_F}(\bar x)> 0$.
Hence being $\mathcal H_F\ge 0$ we have a contradiction.
\end{proof}

As a consequence of the previous result we get the following optimal estimate for the maximum of $u_{\Omega}$.    
\begin{thm}\label{sperb}
Let $\Omega$ be a bounded convex domain in $ \R^{N}$, $ N \ge 2$, and $1<p< +\infty$. It holds that
\begin{equation}
\label{stima_max}
\frac{R_{F}^{q}(\Omega)}{qN^{q-1}}\le M(\Omega)\le \frac{R_{F}^{q}(\Omega)}{q}.
\end{equation}
\end{thm}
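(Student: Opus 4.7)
The plan is to split into the two inequalities. The lower bound will drop out immediately from monotonicity under inclusion and the explicit torsion on the Wulff shape. The upper bound is the substantial half: it combines the $\mathcal P$-function maximum principle of Theorem~\ref{prsperb} with a one-dimensional ODE comparison along an $F^o$-straight segment joining a maximum point of $u_\Omega$ to the boundary.

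For the lower bound, by the definition \eqref{inrad} of the anisotropic inradius there exists $x_0 \in \Omega$ with $\mathcal W_{R_F(\Omega)}(x_0) \subset \Omega$; the monotonicity property \eqref{monotoniamax} together with the explicit formula \eqref{pb_rad2} immediately yields
\[
M(\Omega) \;\ge\; M(\mathcal W_{R_F(\Omega)}(x_0)) \;=\; \frac{R_F^{q}(\Omega)}{q\, N^{q-1}}.
\]

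For the upper bound, after a smooth strictly convex approximation from inside so that Theorem~\ref{prsperb} applies (the estimate then passes to the limit because $M$ and $R_F$ are continuous under Hausdorff convergence on bounded convex sets), I would rewrite the conclusion of Theorem~\ref{prsperb} as the Hamilton--Jacobi-type inequality
\[
F(\nabla u_\Omega)^{p} \;\le\; q\,\bigl(M(\Omega) - u_\Omega\bigr) \qquad \text{on } \overline\Omega.
\]
Fix a maximum point $x_M$ of $u_\Omega$ and, by convexity, choose $y \in \partial\Omega$ with $F^o(x_M - y) = d_F(x_M) \le R_F(\Omega)$, so that the whole segment $[x_M, y]$ lies in $\overline\Omega$. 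Parametrize it by $F^o$-arclength via $\gamma(s) = x_M + s(y - x_M)/d_F(x_M)$ for $s \in [0, d_F(x_M)]$, so that $F^o(\gamma'(s)) \equiv 1$, and set $g(s) := M(\Omega) - u_\Omega(\gamma(s))$. Then $g \ge 0$, $g(0) = 0$, $g(d_F(x_M)) = M(\Omega)$, and combining \eqref{imp} with the Hamilton--Jacobi inequality yields
\[
|g'(s)| \;\le\; F(\nabla u_\Omega(\gamma(s)))\,F^o(\gamma'(s)) \;\le\; \bigl(q\,g(s)\bigr)^{1/p}.
\]
Since $1 - 1/p = 1/q$, this rewrites on $\{g > 0\}$ as $|(g^{1/q})'(s)| \le q^{-1/q}$; $g^{1/q}$ extends as a $q^{-1/q}$-Lipschitz function to all of $[0, d_F(x_M)]$, and integrating from $0$ to $d_F(x_M)$ gives $M(\Omega)^{1/q} \le d_F(x_M)\, q^{-1/q} \le R_F(\Omega)\, q^{-1/q}$, that is, $M(\Omega) \le R_F^{q}(\Omega)/q$.

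The main technical points to verify carefully are the smooth convex approximation needed to invoke Theorem~\ref{prsperb} (together with the propagation $M(\Omega_n) \to M(\Omega)$, $R_F(\Omega_n) \to R_F(\Omega)$ under inner Hausdorff approximation) and the Lipschitz extension of $g^{1/q}$ across its zero set when integrating the ODE. The scheme naturally accounts for the sharpness statements of the intro version Theorem~\ref{intro:sperb}: equality in the lower bound is forced by applying the strong comparison principle to the subsolution obtained by zero-extending $u_{\mathcal W_{R_F(\Omega)}(x_0)}$, which pins $\partial\mathcal W_{R_F(\Omega)}(x_0) \subset \partial\Omega$ and identifies $\Omega$ with the Wulff shape; optimality of the upper constant $1/q$ is witnessed by $N$-rectangles thinning in one direction, on which the torsion function degenerates to the explicit one-dimensional $p$-torsion solution realizing $M = R_F^{q}/q$ in the limit.
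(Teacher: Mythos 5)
Your proposal is correct and follows essentially the same route as the paper: the lower bound via domain monotonicity and the explicit Wulff-shape torsion function, and the upper bound by combining the $\mathcal P$-function bound of Theorem~\ref{prsperb} with the Cauchy--Schwarz-type inequality \eqref{imp} along the segment from a maximum point to a nearest boundary point in the $F^o$-distance, integrating the resulting ODE, and then removing the smoothness assumption by inner approximation. Your treatment of the ODE step (via the Lipschitz bound on $g^{1/q}$) is in fact slightly more careful than the paper's separation-of-variables computation.
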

\begin{rem}
In the next section we will show that the right-hand side inequality in \eqref{stima_max} is optimal on a suitable sequence of thinning rectangles (see Proposition \ref{rect} and \eqref{catena1}. We stress that, in general, the quotient $\frac{R^{q}_{F}(\Omega)}{qM(\Omega)}$ approaches the value $1$ also for different sequences of sets (see the example \ref{exHen}).
\end{rem}
\begin{proof}
The left-hand side inequality of \eqref{stima_max} follows by \eqref{monotoniamax} and \eqref{pb_rad2}. Hence, let us prove the other inequality.

First of all, suppose that $\Omega$ is a $C^2$, strictly convex domain. Let $v$ be a direction in $ \R^N$. By Theorem \ref{prsperb} and property \eqref{imp} we have
\begin{equation}
\label{der_dir}
\displaystyle \frac{du_{\Omega}}{dv}=\langle \nabla u, v \rangle  \le F(\nabla u_{\Omega}) F^o(v) \le   \left[ \frac{p}{p-1}\left(M(\Omega)-u_{\Omega}\right)\right]^{\frac{1}{p}}F^o(v),
\end{equation}
where $M(\Omega)$ is the maximum of  $u_{\Omega}$ in $\overline \Omega.$ Let us denote by $x_M$ the point of $\Omega$ such that $M(\Omega )=u_{\Omega}(x_M)$, by $\bar x \in \de \Omega$ such that $F^o(x_M- \bar x)=d_F(x_M)$ and by $v$ the direction of the straight line joining the points $x_m$ and $\bar x$. Then by   \eqref{der_dir} we get
\begin{equation*}
\displaystyle \int_{u(\bar x)}^{M(\Omega)} \displaystyle \frac{1}{\left(M(\Omega)-u_{\Omega}\right)^{\frac{1}{p}}}du \le  \displaystyle  \left( \frac{p}{p-1}\right)^{\frac{1}{p}}F^o(v)|\bar x-x_{M}|= \left( \frac{p}{p-1}\right)^{\frac{1}{p}}F^{o}(\bar x-x_{M}).
\end{equation*}
Being $F^{o}(\bar x-x_{M}) \le R_{F}(\Omega) $, we get
\[
\left( \frac{p}{p-1}\right)M(\Omega)^{ \frac{p-1}{p}}\le  \displaystyle  \left( \frac{p}{p-1}\right)^{\frac{1}{p}}R_{F}(\Omega),
\]
which gives the estimate \eqref{stima_max} for smooth convex domains. 
To prove the estimate in the case of a general convex body $\Omega$, we proceed by approximation. It is well-known (see for example \cite{bf}) that a convex body $\Omega$ can be approximated in the Hausdorff distance by an increasing sequence of smooth strictly convex bodies $\Omega_n\subseteq\Omega$. Clearly, $R_F(\Omega_n)\nnearrow R_F(\Omega)$.

Let $u_n\ge 0$ be the torsion function in $\Omega_n$. In order to conclude the proof we have to show that $M(\Omega_n)\to M(\Omega)$ as $n\to\infty$. We first observe that by \eqref{stima_max}, 
\begin{equation}
\label{maxapp}
M(\Omega_n)\le \frac{R_{F}^{q}(\Omega_n)}{q}\le  \frac{R_{F}^{q}(\Omega)}{q},
\end{equation}
 hence $u_n$ are bounded in $L^\infty(\Omega_n)$. 
 Furthermore, applying Theorem \ref{prsperb} in $\Omega_n$ we have
\[
\frac{p-1}{p}F^p(\nabla u_n)+u_n \le M(\Omega_n)  \quad \text{ in } \overline\Omega_n.
\]
Then by property \eqref{upb}
\begin{equation}
\label{app2}
|\nabla u_n| \le C \quad \text{ in } {\overline\Omega}_n.
\end{equation}
Hence by \eqref{maxapp} and \eqref{app2}, using Ascoli-Arzel\`a theorem we get that $u_n \to u_{\Omega}$ uniformly in $\Omega$ and this  allows to pass to the limit in \eqref{maxapp} and the proof is completed.
\end{proof}
\begin{rem}
We point out that if we take $\Omega$ smooth, the thesis of Theorem \ref{sperb} holds if we assume only that the anisotropic mean curvature of $\Omega$ is nonnegative. 
\end{rem}

\section{Proof of Theorem \ref{main}}
 We split the proof  in various theorems. 
We first prove the lower bound for $\Psi(\Omega)$ in \textit{ii)}.

\begin{thm}\label{torsion}
	If $\Omega\in\mathbb{R}^N$ is a convex bounded domain, $ N \ge 2$, and $1<p<+\infty$, then
	\begin{equation}\label{stimaT}
	\frac{T_p(\Omega)}{|\Omega|} \ge \frac{1}{N^{q-1}} \frac{1}{N+q}{R_F(\Omega)^{q}},
	\end{equation} 
	where $R_F(\Omega)$ is the anisotropic inradius of $\Omega$ defined in \eqref{inrad}. Moreover the equality holds when $\Omega $ is a Wulff shape.
\end{thm}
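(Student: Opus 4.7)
The plan is to apply the variational characterization \eqref{tors0} to the explicit ``Wulff-profile'' test function
\[
\psi_0(x) = \phi(d_F(x)), \qquad \phi(t) = \frac{R^q - (R-t)^q}{qN^{q-1}}, \qquad R := R_F(\Omega),
\]
which, by Remark \ref{pb_rad}, coincides with the actual torsion function when $\Omega=\mathcal W_R$. This ensures that every inequality in the chain collapses to an equality on the Wulff shape. The convexity of $\Omega$ will enter only through a Brunn--Minkowski-type control of the distribution function of $d_F$.

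First I would set $\mu(t):=|\{x\in\Omega:d_F(x)>t\}|$ and use $F(\nabla d_F)=1$ a.e., together with the layer-cake and coarea formulas, to reduce both $A:=\int_\Omega\psi_0\,dx$ and $\int_\Omega F(\nabla\psi_0)^p\,dx$ to one-dimensional integrals against $\mu$ and $-\mu'$. A single integration by parts couples them via the identity $\int_\Omega F(\nabla\psi_0)^p\,dx=B-(q/N)A$, with $B:=R^q|\Omega|/N^q$. Plugging this into \eqref{tors0} gives
\[
T_p(\Omega)^{p-1}\ge f(A), \qquad f(x):=\frac{x^p}{B-qx/N},
\]
and a short computation yields $f'(x)=px^{p-1}(B-x/N)/(B-qx/N)^2>0$ on $(0,NB/q)$, so it suffices to produce a lower bound on $A$.

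The core geometric step is the chord inequality
\[
\mu(t)\ge|\Omega|\left(1-\frac{t}{R}\right)^{N}, \qquad t\in[0,R].
\]
To establish it, I would identify the inner parallel body $\Omega_t:=\{d_F>t\}$ with the Minkowski difference $\Omega\ominus\mathcal W_t$ and check the Minkowski-combination inclusion
\[
\alpha\,\Omega_{t_1}+(1-\alpha)\,\Omega_{t_2}\subseteq\Omega_{\alpha t_1+(1-\alpha)t_2}, \qquad \alpha\in[0,1],
\]
by writing any point $z$ of a Wulff ball of radius $\alpha t_1+(1-\alpha)t_2$ around $\alpha x_1+(1-\alpha)x_2$ as $\alpha z_1+(1-\alpha)z_2$ with $z_i$ in a Wulff ball of radius $t_i$ around $x_i$, and invoking convexity of $\Omega$. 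The anisotropic Brunn--Minkowski inequality then yields concavity of $t\mapsto\mu(t)^{1/N}$ on $[0,R]$, and the chord bound follows at once from $\mu(0)=|\Omega|$ and $\mu(R)=0$.

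Inserting the chord bound into the 1D integral for $A$ produces $A\ge R^q|\Omega|/(N^{q-1}(N+q))=:A_\ast$, and the one-line algebraic identity $B-qA_\ast/N=A_\ast$ gives $f(A_\ast)=A_\ast^{p-1}$. Monotonicity of $f$ then closes the argument with $T_p(\Omega)^{p-1}\ge f(A_\ast)=A_\ast^{p-1}$, i.e., $T_p(\Omega)\ge R_F(\Omega)^q|\Omega|/(N^{q-1}(N+q))$; the equality on $\mathcal W_R$ is immediate since $\psi_0\equiv u_\Omega$ and $\mu(t)=|\mathcal W_R|(1-t/R)^N$ there. The main obstacle I foresee is the concavity of $\mu^{1/N}$: the Minkowski-combination construction for inner parallel bodies with respect to Wulff balls is the only genuinely geometric input, while the rest of the argument reduces to one-dimensional integrations and a routine monotonicity check.
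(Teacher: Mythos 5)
Your proof is correct, but it takes a genuinely different route from the paper's. The paper plugs into \eqref{tors0} the test function $\varphi=\frac{1-(\mathcal K^{o})^{q}}{qN^{q-1}}$ built from the Minkowski gauge $\mathcal K^{o}$ of $\Omega$ itself: the numerator $\int_\Omega\varphi$ is then computed \emph{exactly} via the coarea formula and the divergence theorem, and the only inequality occurs in the gradient term, where $\mathcal W_{R_F(\Omega)}\subseteq\Omega$ gives $\mathcal K\ge R_F(\Omega)\,F$; this requires starting from strictly convex smooth domains and concluding by approximation, but it yields the equality characterization essentially for free (equality forces $\mathcal K=R_F(\Omega)F$, i.e.\ $\Omega$ is a Wulff shape). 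You instead use the ``web function'' $\phi(d_F)$ modelled on the radial torsion profile of $\mathcal W_R$, reduce everything to one-dimensional integrals against the distribution function $\mu(t)=|\{d_F>t\}|$, and import the geometry through the Brunn--Minkowski concavity of $t\mapsto\mu(t)^{1/N}$ for the inner parallel bodies $\Omega\ominus t\overline{\mathcal W}$. I checked your identities: $\int_\Omega F(\nabla\psi_0)^p\,dx=B-\frac qN A$ (using $(q-1)p=q$), the sign of $f'$ on $(0,NB/q)$, the chord bound, and $B-\frac qN A_*=A_*$ all come out right, and on $\mathcal W_R$ one has $d_F=R-F^o$, so $\psi_0$ is indeed the torsion function and every inequality saturates. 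Your argument buys two things: it needs no smoothness or strict convexity (the Minkowski-combination inclusion and Brunn--Minkowski hold for arbitrary convex bodies, so the paper's approximation step disappears), and the intermediate chord bound $\mu(t)\ge|\Omega|(1-t/R)^{N}$ is of independent interest. What it gives up is the ``only if'' part of the equality discussion, which the theorem as stated does not require. Two small points to tidy up: the inequality you invoke is the ordinary Euclidean Brunn--Minkowski inequality applied to the convex sets $\Omega_t$ (no anisotropic version is needed), and the integration by parts in $t$ should be justified by noting that $\mu$ is continuous (indeed locally Lipschitz) on $[0,R)$, which follows from the concavity of $\mu^{1/N}$ you have already established.
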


\begin{proof}
Let us assume first that $\Omega$ is a strictly convex domain and then we remove this assumption with a proof that follows by approximation as in Theorem \ref{sperb}. Let us consider as test function into \eqref{tors0} the following
	\[
	\varphi(x)= \frac{1-\mathcal K^{o}(x)^q}{qN^{q-1}}
	\]
	where $\mathcal K^{o}$ is the support function of the polar set of $\Omega$, defined in  \eqref{suppfunc} . Then $\Omega=\{\mathcal K^{o}<1\}$. By \eqref{pb_rad2}, we observe that when $\Omega =\mathcal W$ then $\varphi$ is exactly the torsion function of the Wulff shape. 
	
	We start computing
	\begin{align*}
	\int_{\Omega} \mathcal K^{o}(x)^q\, dx &= \int_{0}^1 \int_{\mathcal K^o=t} \frac{t^{q}}{|\nabla \mathcal K^o(x)|}\, d\mathcal{H}^{N-1}\, dt = \int_{0}^{1} t^{q} \int_{\mathcal K^o=t} \frac{\mathcal K(\nabla \mathcal K^o(x))}{|\nabla \mathcal K^o|}\, d\mathcal{H}^{N-1}\, dt,\\
	&= \int_{0}^{1} t^{q+N-1}  \int_{\partial\Omega} \mathcal K(\nu_\Omega (x)) \, d\mathcal{H}^{N-1}\, dt = \frac{1}{N+q} \int_{\partial\Omega} x\cdot \nu_{\Omega}(x) \, d\mathcal{H}^{N-1} \\
	&= \frac{1}{N+q} \int_{\Omega} \mbox{div } x \, dx = \frac{N |\Omega|}{N+q},
	\end{align*}
	where $\mathcal K=(\mathcal K^{o})^{o}$.
	Then we have
	\begin{equation}
	\label{uguaglianza}
	\left(\int_{\Omega} \varphi\, dx \right)^p = \frac{|\Omega|^p}{N^q(N+q)^p}=\left(\frac{|\Omega|}{N^{q-1}(N+q)}\right)^p.
	\end{equation}
	Let us now compute 
	\begin{align*}
	\int_{\Omega} F^p(\nabla \varphi) \, dx &= \frac{1}{q^p N^q} \int_{\Omega} F^p\left(\nabla (\mathcal K^o(x)^q)\right)\, dx = 
	\frac{1}{ N^q} \int_{0}^{1}t^{p(q-1)} \int_{\mathcal K^o=t} \frac{F^p(\nabla \mathcal K^o)}{|\nabla \mathcal K^o|}\, d\mathcal{H}^{N-1}\, dt\\[.3cm]
	&
	= \frac{1}{ N^q} \int_{0}^{1} t^{q+N-1} \int_{\partial\Omega} |\nabla \mathcal K^o|^{p-1} F^p(\nu_\Omega)\, d\mathcal{H}^{N-1}\, dt \\
	&=\frac{1}{ N^q} \int_{0}^{1} t^{q+N-1} \int_{\partial\Omega} \frac{F^{p}(\nu_{\Omega})}{\mathcal K^{p-1}(\nu_{\Omega})}\, d\mathcal{H}^{N-1}\, dt,
	\end{align*}
where last equality follows by the identity	$\mathcal K(\nabla \mathcal K^{o}(x))=1$. 
Being $\mathcal W_{R_{F}(\Omega)}\subseteq \Omega$, it follows that $\mathcal K(x) \ge R_{F}(\Omega)F(x)$, so we have
	\begin{multline}
	\label{stima}
	\int_{\Omega} F^p(\nabla \varphi) \, dx \le \frac{1}{(N+q)N^{q}}\cdot \frac{1}{R_{F}(\Omega)^{p}}  \int_{\partial\Omega} \mathcal K(\nu_\Omega (x)) \, d\mathcal{H}^{N-1}=\\ =
	\frac{1}{(N+q)N^{q-1}}\cdot \frac{|\Omega|}{R_{F}(\Omega)^{p}}.
	\end{multline}
	Joining together \eqref{uguaglianza} and \eqref{stima}, we have the thesis.	
	
	Now we prove the validity of \eqref{stimaT} without the assumption on the strict convexity of the domain $\Omega$.
As in the proof of Theorem \ref{sperb}, let $\Omega_n$ be a sequence of smooth strictly convex bodies such that $\Omega_n\to\Omega$. Such a convergence ensures that, as $n\to\infty$,
    \begin{equation}\label{converg}
    |\Omega_n|\to|\Omega|\qquad\hbox{and}\qquad R_F(\Omega_n)\to R_F(\Omega).
    \end{equation}
    By \eqref{monotonia}, it follows that
    \[T_p(\Omega)\ge T_p(\Omega_n),\]
    and by applying \eqref{stimaT} to each $\Omega_n$, we find
\[T_p(\Omega)\ge \frac{{|\Omega_n|}}{N^{q-1}(N+q)}R_F(\Omega_n)^q,\]
which, combined with \eqref{converg}, gives the desired result. Finally we stress that if $\Omega$ is a Wulff shape, the equality case follows from Remark \ref{pb_rad}. On the other hand, if the equality holds in \eqref{stimaT}, then 
	equality must hold in \eqref{stima}, and then $\mathcal K(x)=R_{F}(\Omega)F(x)$, which implies $\Omega=\mathcal W_{R_{F}}$.
	\end{proof}
 Let us consider the functional
\begin{equation}\label{func}
\Phi(\Omega)=\frac{T_p(\Omega)}{|\Omega|M(\Omega)}.
\end{equation}
As  consequence of theorems \ref{torsion} and  \ref{sperb}, we can prove the following estimates for \eqref{func} which is statement i) of Theorem \ref{main}.
\begin{thm}
For any bounded convex domain $\Omega\subset \R^N$, $ N\ge 2$, $1<p<+\infty$ it holds that
\begin{equation}
\label{bounds}
   \frac{q}{N^{q-1}(N+q)} \le\Phi(\Omega)\le \frac{q}{q+1}
\end{equation}
\end{thm}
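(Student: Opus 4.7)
The plan is to obtain both inequalities in \eqref{bounds} as direct consequences of the results already established in Sections 3 and of Theorem \ref{torsion}, with the $\mathcal P$-function estimate doing all the real work for the upper bound.

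For the lower bound, I would simply chain together Theorem \ref{torsion} and Theorem \ref{sperb}. Indeed, Theorem \ref{torsion} gives
\[
\frac{T_p(\Omega)}{|\Omega|}\ge\frac{R_F(\Omega)^{q}}{N^{q-1}(N+q)},
\]
while the right-hand inequality in \eqref{stima_max} of Theorem \ref{sperb} gives $M(\Omega)\le R_F(\Omega)^{q}/q$. Dividing the first by $M(\Omega)$ and using the second bound on the right-hand side immediately yields $\Phi(\Omega)\ge q/[N^{q-1}(N+q)]$.

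For the upper bound, the idea is to integrate the pointwise $\mathcal P$-function inequality from Theorem \ref{prsperb}. Assuming first that $\Omega$ is a smooth strictly convex domain (so that $\mathcal H_F\ge 0$ on $\partial\Omega$ and the theorem applies), we have
\[
\tfrac{p-1}{p}F(\nabla u_{\Omega})^{p}+u_{\Omega}\le M(\Omega)\qquad\text{in }\overline\Omega,
\]
which, multiplied by $q=p/(p-1)$, becomes $F(\nabla u_{\Omega})^{p}\le q\,(M(\Omega)-u_{\Omega})$. Integrating over $\Omega$ and using the two equivalent expressions $T_p(\Omega)=\int_\Omega F(\nabla u_\Omega)^p\,dx=\int_\Omega u_\Omega\,dx$ from \eqref{eq:ptor1} gives
\[
T_p(\Omega)\le q\bigl(M(\Omega)|\Omega|-T_p(\Omega)\bigr),
\]
i.e.\ $(q+1)T_p(\Omega)\le q\,M(\Omega)|\Omega|$, which is exactly $\Phi(\Omega)\le q/(q+1)$.

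The only minor technical point, which I expect to be the main (but routine) obstacle, is that the $\mathcal P$-function argument of Theorem \ref{prsperb} is stated for $C^2$ domains, whereas the statement of the theorem concerns general bounded convex domains. This is handled by the same approximation scheme used in the proof of Theorem \ref{sperb}: approximate $\Omega$ in Hausdorff distance by an increasing sequence of smooth strictly convex bodies $\Omega_n\subseteq\Omega$, apply the already-proved bound to each $\Omega_n$, and pass to the limit using $|\Omega_n|\to|\Omega|$, $T_p(\Omega_n)\to T_p(\Omega)$ (via monotonicity \eqref{monotonia} and the uniform convergence $u_{\Omega_n}\to u_\Omega$ obtained exactly as in Theorem \ref{sperb}), and $M(\Omega_n)\to M(\Omega)$.
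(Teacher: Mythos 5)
Your proposal matches the paper's proof essentially verbatim: the lower bound by combining Theorem \ref{torsion} with the upper estimate for $M(\Omega)$ in \eqref{stima_max}, and the upper bound by integrating the $\mathcal P$-function inequality of Theorem \ref{prsperb} and using both expressions for $T_p(\Omega)$ in \eqref{eq:ptor1}. Your explicit remark about approximating a general convex domain by smooth strictly convex ones is a point the paper leaves implicit here, and it is handled exactly as you describe.
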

\begin{proof}
We first prove the lower bound for the functional $\Phi$. By \eqref{stimaT} and \eqref{stima_max} we have
\begin{equation*}
    \Phi(\Omega)= \frac{T_p(\Omega)}{|\Omega|M(\Omega)} \ge \frac{q}{N^{q-1}(N+q)},
\end{equation*}
which gives the lower bound in \eqref{bounds}. 

In order to prove the inequality in the right-hand side in \eqref{bounds}, by Theorem \ref{prsperb} we have
\begin{equation*}
\frac{p-1}{p}F^p(\nabla u_{\Omega})+u_{\Omega} \le M(\Omega) \quad\text{in } \overline \Omega.
\end{equation*}
Integrating in both sides and recalling \eqref{eq:ptor1}, we get
\begin{equation*}
    \left(\frac{p-1}{p}+1\right)T_p(\Omega) \le M(\Omega)|\Omega|,
\end{equation*}
which implies the upper bound in \eqref{bounds}.
\end{proof}
In the following last result we prove the upper bound in statement \textit{ii)} of Theorem \ref{main}, which follows immediately by the preceding results. We stress that in the anisotropic setting, the case $p=2$ was previously considered in \cite{bgm} with a completely different proof.

\begin{thm}\label{torsion_up}
Let $\Omega\in\mathbb{R}^N$ be a bounded convex domain, $ N \ge 2$, $1<p<+\infty$. It holds that
	\begin{equation}\label{stimaT2}
	\frac{T_p(\Omega)}{|\Omega|} \le \frac{R_F(\Omega)^{q}}{q+1}.
	\end{equation} 
\end{thm}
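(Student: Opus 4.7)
The plan is to deduce \eqref{stimaT2} by combining the two preceding results that together pin down both factors in the denominator of the functional $\Phi$. The right-hand inequality of \eqref{bounds} states that
\[
\frac{T_p(\Omega)}{|\Omega|\,M(\Omega)} \le \frac{q}{q+1},
\]
and Theorem \ref{sperb} provides the sharp pointwise bound
\[
M(\Omega) \le \frac{R_F(\Omega)^{q}}{q}.
\]
The two inequalities apply to any bounded convex domain with $1<p<+\infty$, which is precisely the hypothesis of Theorem \ref{torsion_up}, so no additional regularity or approximation argument is needed: convexity of $\Omega$ guarantees $\mathcal{H}_F \ge 0$ on $\partial\Omega$, which is what Theorem \ref{prsperb} (and hence the upper bound on $\Phi$) requires.

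Thus I would simply chain the two estimates:
\[
T_p(\Omega) \le \frac{q}{q+1}\,|\Omega|\,M(\Omega) \le \frac{q}{q+1}\,|\Omega|\cdot\frac{R_F(\Omega)^{q}}{q} = \frac{|\Omega|\,R_F(\Omega)^{q}}{q+1},
\]
and divide by $|\Omega|$ to conclude.

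Since the statement follows by a one-line combination of results already at hand, there is no real obstacle here; the genuine work was carried out earlier, namely the $\mathcal{P}$-function maximum principle of Theorem \ref{prsperb} (which yields the $\Phi$ upper bound after integration) and the integration of the gradient bound along an anisotropic geodesic from the maximum point to the boundary (which yields $M(\Omega) \le R_F(\Omega)^q/q$). One may also remark that the optimality on thinning rectangles is inherited from the optimality of the two ingredient inequalities, since both are simultaneously saturated in that limit.
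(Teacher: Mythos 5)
Your proof is correct and is exactly the paper's argument: the paper also deduces \eqref{stimaT2} by chaining the right-hand inequality of \eqref{bounds} with the upper bound $M(\Omega)\le R_F(\Omega)^q/q$ from Theorem \ref{sperb}.
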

\begin{proof}
By the right-hand side inequality in \eqref{bounds}, and \eqref{stima_max}, we have
\begin{equation}
\label{catena1}
\frac{T_p(\Omega)}{|\Omega|} \le \frac{q}{q+1} M(\Omega)\le 
 \frac{R_{F}^{q} (\Omega)}{q+1}.
\end{equation}
\end{proof}

The final part of the section is devoted to prove the optimality 
of \eqref{stimaT2}. As a consequence, by \eqref{catena1} this will give the optimality of
the right-hand side inequality of \eqref{bounds}, and of \eqref{stima_max}.
\begin{prop}\label{rect}
Let $\O_\varepsilon$ be the $N$-rectangle $]-\varepsilon,\ep[\times]-a_2,a_2[\times\ldots\times]-a_N,a_N[$, and suppose that $R_{F}(\Omega)=\eps F^{o}(e_{1})$. Then
\[
\lim_{\ep\to 0^+}\frac{T_{p}(\O_\ep)}{\left(R_{F}(\O_\ep)\right)^{q}|\O_\ep|}=\frac{1}{q+1}\,.
\]
\end{prop}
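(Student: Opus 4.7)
The upper bound is immediate from Theorem \ref{torsion_up}, which gives $T_p(\Omega_\varepsilon)/(|\Omega_\varepsilon|R_F(\Omega_\varepsilon)^q)\le 1/(q+1)$ for every $\varepsilon>0$; so the task reduces to producing a matching asymptotic lower bound. The heuristic is that, as $\varepsilon\to 0^+$, the thin rectangle becomes indistinguishable from the infinite slab $\{|x_1|<\varepsilon\}$, whose torsion function is the explicit one-dimensional profile
\[
v_\varepsilon(x_1)=\frac{\varepsilon^q-|x_1|^q}{q\,F(e_1)^q},
\]
obtained by plugging the ansatz $u(x)=v(x_1)$ into $-\mathcal{Q}_p u=1$ and using $F(v'e_1)=|v'|F(e_1)$ together with Euler's identity $F_{\xi_1}(e_1)=F(e_1)$.

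The plan is to use $v_\varepsilon$ as a test function in the Rayleigh-type characterization \eqref{tors0}, after multiplication by a transverse cutoff so that the product lies in $W^{1,p}_0(\Omega_\varepsilon)$. For $\delta>0$ small, fix a Lipschitz cutoff $\eta_\delta\in W_0^{1,\infty}(\Omega')$, where $\Omega':=\prod_{i\ge 2}(-a_i,a_i)$, such that $\eta_\delta\equiv 1$ outside a $\delta$-tubular neighborhood of $\partial\Omega'$ and $\|\nabla\eta_\delta\|_\infty\le C/\delta$, and set $\psi_{\varepsilon,\delta}(x):=v_\varepsilon(x_1)\eta_\delta(x')$. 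A direct one-dimensional calculation yields the twin identities
\[
\int_{-\varepsilon}^{\varepsilon} v_\varepsilon\,dx_1 \;=\; \int_{-\varepsilon}^{\varepsilon}|v_\varepsilon'|^p F(e_1)^p\,dx_1 \;=\; \frac{2\,\varepsilon^{q+1}}{(q+1)F(e_1)^q},
\]
so the numerator $\int\psi_{\varepsilon,\delta}\,dx$ and the $x_1$-derivative part of the denominator $\int F(\nabla\psi_{\varepsilon,\delta})^p\,dx$ agree at leading order, both equal to this common value multiplied by $\int_{\Omega'}\eta_\delta\,dx'$ and $\int_{\Omega'}\eta_\delta^p\,dx'$ respectively, which tend to $|\Omega'|$ as $\delta\to 0$. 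By the sub-additivity of $F$, together with the scalings $v_\varepsilon=O(\varepsilon^q)$ and $|\nabla_{x'}\eta_\delta|=O(1/\delta)$ supported on an $x'$-set of measure $O(\delta)$, the $\nabla_{x'}\eta_\delta$-contribution to the denominator is of order $O(\varepsilon^{qp+1}\delta^{1-p})=o(\varepsilon^{q+1})$ for each fixed $\delta$, because $qp-q=q(p-1)=p>0$.

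Inserting these estimates into \eqref{tors0}, extracting the $(p-1)$th root, dividing by $|\Omega_\varepsilon|R_F(\Omega_\varepsilon)^q$ and using the hypothesis $R_F(\Omega_\varepsilon)=\varepsilon F^o(e_1)$ (which, combined with the standard value $R_F(\Omega_\varepsilon)=\varepsilon/F(e_1)$ for the Wulff inradius of a thin rectangle, forces the identity $F(e_1)F^o(e_1)=1$ in the direction $e_1$), then passing $\varepsilon\to 0^+$ with $\delta$ fixed and finally $\delta\to 0$, yields
\[
\liminf_{\varepsilon\to 0^+}\frac{T_p(\Omega_\varepsilon)}{|\Omega_\varepsilon|R_F(\Omega_\varepsilon)^q}\;\ge\;\frac{1}{q+1},
\]
which together with the upper bound closes the argument. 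The main technical point is the bookkeeping of the cross terms arising when expanding $(F(v_\varepsilon'\eta_\delta e_1)+F(v_\varepsilon\nabla_{x'}\eta_\delta))^p$ via an inequality of the form $(a+b)^p\le a^p+C_p(a^{p-1}b+b^p)$; these cross terms are absorbed by the same separation of scales $v_\varepsilon=O(\varepsilon^q)\ll |v_\varepsilon'|=O(\varepsilon^{q-1})$ that controls the pure $\nabla_{x'}\eta_\delta$-contribution.
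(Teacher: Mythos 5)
Your proposal is correct and follows essentially the same route as the paper: the upper bound comes from Theorem \ref{torsion_up}, and the matching lower bound is obtained by inserting the one-dimensional slab profile $\propto(\varepsilon^q-|x_1|^q)$, truncated near the lateral boundary, into the Rayleigh characterization \eqref{tors0}, with the boundary-layer and cross terms shown to be of lower order, and with the identity $F(e_1)F^o(e_1)=1$ established from the hypothesis $R_F(\Omega_\varepsilon)=\varepsilon F^o(e_1)$. The only cosmetic differences are that the paper uses a transverse cutoff layer of width $\varepsilon$ (a single limit) rather than your width-$\delta$ cutoff with a double limit, and a different normalization of the profile, neither of which affects the argument.
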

The hypothesis $R_{F}(\Omega)=\eps F^{o}(e_{1})$ is not restrictive, in the sense that if it is not true we can choose a rotated $N$-rectangle where $R_{F}(\Omega)=\eps F^{o}(\nu)$ for some direction $\nu$, and use the remark below.
\begin{rem}
If $A\in\textrm{SO}(N)$ is a rotation matrix, then, denoting by $F_{A}(\xi)=F(A\xi)$, it holds that 
\[
(F_{A})^{o}(\xi)=(F^{o})_{A}(\xi),\quad \text{and then}\quad R_{F_{A^{T}}}(A\Omega)=R_{F}(\Omega)
\]
(see \cite{dgp} for the details). Hence, emphasizing the dependence on $F$ by denoting $T_{p}(\Omega)=T_{p,F}(\Omega)$, we have
\begin{align*}
T_{p,F}{(\Omega)}&=\max_{\varphi\in W_{0}^{1,p}(\Omega)}\frac{\ds\left(\int_{\Omega}|\varphi(x)|dx\right)^{p}}{\ds\int_{\Omega}F^{p}(\nabla \varphi(x))dx}
=
\max_{\varphi\in W_{0}^{1,p}(\Omega)}\frac{\ds\left(\int_{A\Omega}|\varphi(A^{T}x)|dx\right)^{p}}{\ds\int_{A\Omega}F_{A^{T}}^{p}(\nabla \varphi(A^{T}x))dx}\\
&=T_{p,F_{A^{T}}}(A\Omega)\ge \frac{|\Omega|}{q+1} R_{F_{A^{T}}}(A\Omega)^{q}=\frac{|\Omega|}{q+1}R_{F}(\Omega)^{q}.
\end{align*}
\end{rem}
\begin{proof}[Proof of Proposition \ref{rect}]
First of all, we observe that
\begin{equation*}
F^{o}(e_{1})=\frac{1}{F(e_{1})}.
\end{equation*}
Indeed, being $R_{F}(\Omega)=\eps F^{o}(e_{1})$, it holds that
\[
\nu_{\Omega}(\eps e_{1})=e_{1}=\frac{F^{o}_{\xi}(e_{1})}{|F^{o}_{\xi}(e_{1})|},
\]
where $\nu_{\Omega}(\eps e_{1})$ is the Euclidean outer normal vector to $\de\Omega$.
Hence by \eqref{seconda} and \eqref{prima}, we have
 \[
F(e_{1})= \frac{1}{|F_{\xi}^{o}(e_{1})|}=\frac{1}{F^{o}(e_{1})},
\]
where last equality follows by $F^{o}(e_{1})=F^{o}_{\xi}(e_{1})\cdot e_{1}=|F^{o}_{\xi}(e_{1})|$.

Let $\O_\ep=C_\ep\cup D_\ep$, where $C_\ep=]-\ep,\ep[\times]a_2+\ep,a_2-\ep[\times\ldots\times]-a_N+\ep,a_N-\ep[$, and $D_\ep= {\O_\ep}\setminus C_\ep$. Setting $x=(x_1,z)$ with $z\in\R^{N-1}$ and $a=(a_2,\ldots,a_N)$, we consider the function $\varphi_\ep$ defined by
\[
\begin{cases}
\varphi_\ep(x_1,z)=\ds\frac{\ep^q-x_1^q}{q}&\mbox{in }C_\ep\\
\varphi_\ep(x_1,z)=\ds\min\big\{|a-z|,|-a-z|\big\}\frac{\ep^q-x_1^q}{q\ep }&\mbox{in }D_\ep.
\end{cases}
\]


We can estimate the anisotropic $p$-torsional rigidity by using $\varphi_{\eps}$ as test function. We have:
\[
T_{p}(\O_\ep)^{p-1}\ge\frac{\ds\left(\int_{\O_\ep}\varphi_\ep\right)^p}{\ds\int_{\O_\ep} F^{p}(\nabla \varphi_\ep)dx}=\frac{\ds\left(\int_{C_\ep}\varphi_\ep+\int_{D_\ep}\varphi_\ep dx\right)^p}{\ds\int_{C_\ep}F^{p}(\nabla \varphi_\ep)dx+\int_{D_\ep}F^{p}(\nabla \varphi_\ep)dx}\;.
\]
We now compute
\[
\int_{C_\ep}\varphi_\ep\,dx=\int_{C_\ep}\frac{\ep^q-x_1^q}{q}\,dx=\frac{|C_\ep|\ep^q}{q+1}
\]
and
\[\int_{C_\ep}F^p(\nabla \varphi_\ep)\,dx=F^{p}(e_1)\int_{C_\ep} x_1^{q}\,dx=F^p(e_1)\frac{|C_\ep|\ep^q}{q+1}\;.
\]
We notice that both $\int_{D_\ep}\varphi_\ep\,dx$ and $\int_{D_\ep}F^{p}(\nabla \varphi_\ep)\,dx$ are negligible, since they go to zero as $\ep^{N+q-1}$. By recalling that
\[
\left(R_{F}(\Omega)\right)^q=\eps^{q}F^{o}(e_{1})^{q}=\frac{\ep^q}{F(e_1)^q}\;,
\]
we have 
\[
\frac{1}{q+1} \ge \lim_{\eps \to 0}\frac{T_{p}(\O_\ep)}{\left(R_{F}(\O_\ep)\right)^q|\O_\ep|}
\ge \frac{1}{q+1}
\]
which concludes the proof.
\end{proof}

\begin{rem}
We believe that the lower bound of $\Phi(\Omega)$ in \eqref{bounds} is not optimal. Actually, in the Euclidean setting, with $p=2$ our bound improves the analogous result of \cite{hlp}:
\[
\Phi(\Omega)\ge \frac{2}{N(N+2)}>\frac{1}{(N+1)^2}
\]

Moreover in \cite{hlp} the authors conjecture that for $F=\mathcal E$, $p=2$ and $N=2$ it holds 
\[
\Phi(\Omega) \ge \frac13,
\]
and 
\begin{equation}
\label{limite}
\Phi(\Omega_n)\to \frac 13,\quad\text{as }n\to\infty,
\end{equation}
where $\Omega_n$ is a sequence of isosceles triangles degenerating to a segment.

In the following example, for $F=\mathcal E$, $N=2$ and $p=2$, we find a sequence of degenerating triangles $\Omega_n$ such that \eqref{limite} holds. 
\end{rem}
\begin{example}\label{exHen}
Let
\[
N=2,\quad F(\xi)=\sqrt{\xi_1^2+\xi_2^2},\quad p=2.
\]
We want to show that there exists a sequence of thinning isosceles triangles $\tau_{a}$ of the plane such that
\begin{equation}
\label{henrot}
\Phi(\tau_{a})=\frac{T_2(\tau_{a})}{|\tau_{a}|M(\tau_{a})} \to \frac{1}{3} \quad\text{as}\quad a\to 0,
\end{equation}
where $T_{2}(\tau_{a})$ is the torsional rigidity of $\tau_{a}$, $M(\tau_{a})$ is the maximum of the torsion function in $\tau_{a}$ and $|\tau_{a}|$ is the area of the triangle.

First of all, we recall that by a result contained in \cite{fgl}, 
 for any sequence of isosceles triangles $\tau_{n}$ such that the ratio $\frac{R(\tau_{n})}{w(\tau_{n})}\to 0$, where $w(\tau_{n})$ is the width of $\tau_{n}$,  then
\[
\lim_{n \to \infty} \frac{T_2(\tau_{n})}{|\tau_{n}|} \frac{P^2(\tau_n)}{|\tau_n |^2}= \frac{2}{3}.
\]

Hence, recalling that in a triangle it holds that $R(\tau_{n})= \frac{2 |\tau_{n}|}{P(\tau_{n})}$  then
\[
\Phi(\tau_n)= \frac{T_2(\tau_{n})}{|\tau_{n}|}  \frac{P^2(\tau_n)}{|\tau_n |^2}\frac{R^2(\tau_n)}{4M(\tau_n)},
\]
the result is proved if we find a sequence of triangles  with vanishing ratio $R(\tau_n)/w(\tau_n)$ and such that $\frac{R^{2}(\tau_n)}{2M(\tau_n)}$ tends to $1$.


\begin{figure}[h]
\begin{center}
\begin{tikzpicture}[scale=1.5,line cap=round,line join=round,>=triangle 45,x=1.0cm,y=1.0cm]
\draw[-stealth,color=black] (-3.5,-0.453) -- (3.5,-0.453);
\draw[-stealth,color=black] (0.,-.6) -- (0.,.7);
\draw (0pt,-16pt) node[right] {\footnotesize $0$};
\draw [rotate around={0.:(0.,0.)}] (0.,0.) ellipse (0.8915105159222746cm and 0.453cm);
\draw[thick] (-3.2656304690483924,-0.453)-- (0.,0.5259598154674863);
\draw[thick] (0.,0.5259598154674863) node[anchor=south east]{\scriptsize $V_{1}$} -- (3.2656304690483924,-0.453);
\draw[thick] (3.2656304690483924,-0.453)-- (-3.2656304690483924,-0.453);
\draw (3.25,-0.457) node[anchor=north] {\scriptsize $V_{2}$};
\draw (-3.25,-0.457) node[anchor=north] {\scriptsize $V_{3}$};
\draw (-0.45,0.35) node[anchor=south east]{\scriptsize $(-a,y_a)$};
\draw (0.45,0.35) node[anchor=south west]{\scriptsize $(a,y_a)$};
\draw (-0.04,0.45) node[anchor=north west]{\scriptsize $2a$};
\end{tikzpicture}
\end{center}
\caption{}
\label{fig1}
\end{figure}
To this aim, let 
\[
\mathcal E_{a}=\left\{(x,y)\in\R^{2}\colon\frac{x^{2}}{1-a^{2}}+\frac{(y-a)^{2}}{a^{2}}=1\right\},
\]
and consider a point $(a,y_{a})$, with $y_{a}=a+a\sqrt{\frac{1-2a^{2}}{1-a^{2}}}$.
Let $\tau_{a}$ be the isosceles triangle constructed with one side on the $x$-axis and with each side tangent to the ellipse at the points $(0,0),$ $ (a,y_{a}),$ $ (-a,y_{a})$, as in Figure \ref{fig1}.

The vertices of the triangle are:
\[
V_{1}=\left(0, y_a+  \frac{a^{4}}{(1-a^{2})(y_a-a)}\right),\quad V_{2}=\left( a+ \frac{y_a(y_a-a)}{a^{3}}(1-a^{2}),0\right),\quad V_{3}=-V_{2}.
\]
Let us observe that $V_{1}\to (0,0)$ as $a\to 0$, while the first coordinate of $V_{2}$ diverges.

Then, denoting by $A(\tau_{a})$ and $P(\tau_{a})$ respectively the area and the perimeter of $\tau_{a}$, and by
\[
h= y_a+ \frac{a^{4}}{(1-a^{2})(y_a-a)},\quad 
\frac b 2=a+ \frac{y_a(y_a-a)}{a^{3}}(1-a^{2}),
\]
we have:
\[
R(\tau_{a})= \frac{2 |\tau_{a}|}{P(\tau_{a})} = \frac{bh}{b+\sqrt{2 h^{2}+{b^{2}}}}.
\]
Now, being $\mathcal E_{a}\subset \tau_{a}$, by the comparison principle and \eqref{stima_max} it holds that
\[
M(\mathcal E_{a})\le M(\tau_{a}) \le \frac{R^{2}(\tau_{a})}{2},\qquad M(\mathcal E_{a})=\frac{a^{2}(1-a^{2})}{2}, 
\]
where the maximum of the torsion function on $\mathcal E_{a}$ follows by a direct computation.
Then, being $h=2a+o(a^{2})$ and $b\to +\infty$ as $a\to 0$, we have

\begin{multline*}
1\le\frac{R(\tau_{a})^{2}}{2 M(\mathcal E_{a})}= \left(\frac{b}{b+\sqrt{2h^{2}+b^{2}}}\right)^{2}\frac{h^{2}}{a^{2}(1-a^{2})}=\left(\frac{1}{1+\sqrt{2\frac{h^{2}}{b^{2}}+1}}\right)^{2}\cdot\frac{h^{2}}{a^{2}(1-a^{2})}\longrightarrow 1\\ \text{ as }a\to 0
\end{multline*}
and \eqref{henrot} is proved.

We explicitly observe that, from the above computations, it holds 
\[
\frac{R^{2}(\mathcal E_{a})}{2M(\mathcal E_{a})}\to 1 \qquad\text{as }a\to 0.
\]
\end{example}

\section*{Acnowledgements}

This work has been partially supported by the FIRB 2013 project ``Geometrical and qualitative aspects of PDE's'' and by GNAMPA of INdAM.


\begin{thebibliography}{20}
\bibitem[AFLT]{aflt}
Alvino A., Ferone V., Lions P.-L., Trombetti G.,
\textit{Convex symmetrization and applications},
{Ann. Inst. H. Poincar\'{e} Anal. Non Lin\'{e}aire},
  14(2):275--293, 1997.
 
\bibitem[vB]{vdB}{van den Berg M.}: \textit{Estimates for the Torsion Function  and Sobolev Constants}, {Potential Anal.} {36},
607--616 (2012). 

\bibitem[vBBV]{vdBBB}{van den Berg M., Buttazzo G., Velichkov B.}:
\textit{Optimization problems involving the first Dirichlet eigenvalue and the torsional rigidity}, New trends in shape optimization, 
Internat. Ser. Numer. Math. {166}, pp. 19--41. Birkh\"auser/Springer (2015).
 
\bibitem[vBFNT]{vdBFNT}{van den Berg M., Ferone V., Nitsch C., Trombetti C.},\textit{On P\'olya's Inequality for Torsional Rigidity
and First Dirichlet Eigenvalue}, Integr. Equ. Oper. Theory { 86}, 579--600  (2016).
  
\bibitem[BF]{bf} Bonnesen T., Fenchel W. Theorie der Konvexen K\"orper. Springer, Berlin, 1934.

\bibitem[BFr]{bor} Borisov D., Freitas P., \textit{Asymptotics for the Expected Lifetime of Brownian Motion on Thin Domains in $\R^{n}$}, J. Theor. Probab. 26:284-309 (2013).

\bibitem[BGM]{bgm} Buttazzo G., Guarino Lo Bianco S., Marini M.,  \textit{Sharp estimates for the anisotropic torsional rigidity and the principal frequency}, J. Math. Anal. Appl. to appear, DOI:10.1016/j.jmaa.2017.03.055.

\bibitem[CFV]{cfv} Cozzi M., Farina A., Valdinoci E., \textit{Gradient Bounds and Rigidity Results for Singular, Degenerate, Anisotropic Partial Differential Equations}, Comm. Math. Phys., 189-214 (2014).

\bibitem[CM]{cm07}
Crasta G., Malusa A.,
\textit{The distance function from the boundary in a Minkowski space}.
{Trans. Amer. Math. Soc.}, 359(12):5725--5759 (2007).

\bibitem[CT]{ct} Cuesta M., Tak\'a\v c P., \textit{A strong comparison principle for positive solutions of degenerate elliptic equations}. Diff. Int. Eq. 13 (2000): 721-746.

\bibitem[DG1]{dgmana}  Della Pietra F.,  Gavitone N., \textit{Sharp bounds for the first eigenvalue and the torsional rigidity related to some anisotropic operators}, Math. Nachr. 287, 194-209 (2014).


\bibitem[DGP]{dgp} Della Pietra F., Gavitone N., Piscitelli G., \textit{A sharp weighted anisotropic Poincar\'e inequality for convex domains}, preprint, 2017.

\bibitem[FGL]{fgl} Fragal\`a I., Gazzola F., Lamboley J., \textit{Sharp Bounds for the $p$-Torsion of Convex Planar Domains}, Geometric Properties for Parabolic and Elliptic PDE's
Springer INdAM Series Volume 2, 97-115 (2013).

\bibitem[HLP]{hlp} Henrot A., Lucardesi I., Philippin G., \textit{On two functionals involving the maximum of the torsion function}, preprint.

\bibitem[LU]{ladyz}
Ladyzhenskaya O.~A., Ural'tseva N.~N.
{{Linear and quasilinear elliptic equations}}.
 Translated from the Russian by Scripta Technica, Inc. Translation
 editor: Leon Ehrenpreis. Academic Press, New York, 1968.


\bibitem[M]{m} Makai E., \textit{On the principal frequency of a membrane and the torsional rigidity of a beam}, Studies in mathematical analysis and related topics, 227-231, Stanford Univ. Press, Stanford, Calif. 1962.

\bibitem[P]{pay} {Payne L.E.}, \textit{Bounds for the maximum Stress in the St-Venant problem}, Indian Journal of Mechanics  and Mathematics, special issue in honor of B. Sen, part 1, 51--59  (1968)

\bibitem[PZ]{pz} P\'olya G., Szeg\H o G.: \emph{Isoperimetric inequalities in Mathematical physics}. Ann. of Math. Stud. 27, Princeton University Press, Princeton 1951.

\bibitem[To]{tk84}
Tolksdorf P.,
\textit{Regularity for a more general class of quasilinear elliptic
 equations}, J. Diff. Eq., 51(1):126--150 (1984).

\bibitem[S]{sp} Sperb R., \textit{Maximum principles and applications}, Academic Press, 1981.

\bibitem[X]{xiath} Xia C., On a class of anisotropic problems, Ph.D. Thesis.

\end{thebibliography}
\end{document}